\documentclass[11pt,reqno]{amsart}

\usepackage[T1]{fontenc}
\usepackage[utf8]{inputenc}
\usepackage{lmodern}
\usepackage{microtype}
\usepackage{amsmath,amssymb,amsthm,mathtools}
\usepackage{enumitem}
\usepackage[colorlinks=true,citecolor=red,linkcolor=red,urlcolor=blue, backref]{hyperref}
\hypersetup{
  pdftitle={Rigid Functions, IP-Systems, and Topological Mild Mixing},
  pdfauthor={Song Shao, Hui Xu},
  pdfkeywords={IP-system, rigid function, uniform rigidity, mild mixing,
  IP-star, SIP-star, uniformly rigid factor}
}
\allowdisplaybreaks[2]
\numberwithin{equation}{section}
\setlist{itemsep=0.25em,topsep=0.5em}

\theoremstyle{plain}
\newtheorem{theorem}{Theorem}[section]
\newtheorem{thma}{Theorem}

\newtheorem{proposition}[theorem]{Proposition}
\newtheorem{lemma}[theorem]{Lemma}
\newtheorem{corollary}[theorem]{Corollary}
\theoremstyle{definition}
\newtheorem{definition}[theorem]{Definition}
\newtheorem{question}[theorem]{Question}
\theoremstyle{remark}
\newtheorem{remark}[theorem]{Remark}
\newtheorem{example}[theorem]{Example}

\newcommand{\N}{\mathbb N}
\newcommand{\Nzero}{\mathbb N_0}
\newcommand{\Z}{\mathbb Z}
\newcommand{\Torus}{\mathbb T}
\newcommand{\Fin}{\mathcal F}
\newcommand{\id}{\operatorname{id}}
\newcommand{\Int}{\operatorname{int}}

\newcommand{\dist}{\operatorname{dist}}
\DeclareMathOperator*{\IPlim}{IP\text{-}lim}
\DeclareMathOperator{\SIP}{SIP}

\title[Rigid Functions and Mild Mixing]{Rigid Functions, IP-Systems,\\
and Topological Mild Mixing}

\author[S.~Shao]{Song Shao}
\address[S. Shao]{School of Mathematical Sciences, University of Science and
Technology of China, Hefei, Anhui 230026, China}
\email{songshao@ustc.edu.cn}

\author[H.~Xu]{Hui Xu}
\address[H. Xu]{Department of Mathematics, Shanghai Normal University,
Shanghai 200234, China}
\email{huixu@shnu.edu.cn}

\thanks{This research is supported by National Natural Science Foundation of China (12371196, 12426201, 12201599). }

\subjclass[2020]{Primary 37B20, 37B05; Secondary 05D10.}
\keywords{IP-system, rigid function, locally rigid function, uniform rigidity,
topological mild mixing}

\begin{document}

\begin{abstract}
We study uniform rigidity and topological mild mixing through continuous
observables.  For a fixed sequence of times, the observables rigid along that
sequence form a closed unital $T^{\pm1}$-invariant algebra and determine
the maximal factor uniformly rigid along the prescribed sequence.  We then
give functional forms of the classical $\SIP^{*}$- and
$\operatorname{IP}^{*}$-return-time criteria: a topological dynamical
system is mildly mixing exactly when it has no nonconstant locally
SIP-rigid observable, and in the minimal category the same property is
equivalent to the absence of nonconstant locally IP-rigid observables.
Finally, a locally IP-rigid observable yields a canonical orbit-name factor
carrying marked local data.  For fixed local data, the $T^{\pm1}$-invariant
core of the local rigidity algebra determines a uniformly rigid factor
whenever the core is nontrivial.
\end{abstract}

\maketitle

\section{Introduction}
\label{sec:introduction}

\subsection{Background on IP-systems and mild mixing}
\label{subsec:intro-background}

Finite-sums recurrence is one of the principal links between Ramsey theory
and dynamics.  Given a sequence $(p_i)_{i\geq1}$ of positive integers, put
\[
 p_\alpha=\sum_{i\in\alpha}p_i
 \quad\text{and}\quad
 \operatorname{FS}(p_i)
 =\{p_\alpha:\emptyset\neq\alpha\subseteq\N
   \text{ is finite}\}.
\]
A set containing such a finite-sums set is called an IP-set.  If $T$ is a
homeomorphism, then
\[
 T_\alpha=T^{p_\alpha}
\]
is the associated IP-system.  Furstenberg developed the IP-system formalism
in ergodic theory \cite{FurstenbergIP}; its combinatorial foundation is
Hindman's finite-sums theorem \cite{Hindman}.  See also
\cite{FurstenbergBook} for the recurrence framework connecting these ideas
with dynamics.

Furstenberg and Weiss introduced the term \emph{mild mixing} for the class
of ergodic probability-preserving automorphisms $T$ such that $T\times S$
is ergodic for every conservative ergodic measure-preserving automorphism
$S$, where the measure of the second system may be finite or infinite.
Their main theorem states that this multiplier property is equivalent to
the absence of nontrivial rigid factors; see
\cite[Theorem, p.~128]{FurstenbergWeissFinite}.  Furstenberg subsequently
developed the associated IP-system viewpoint \cite{FurstenbergIP}.  Thus
measurable mild mixing is characterized both by the absence of rigid factors
and by a multiplier property.

Uniform rigidity in topological dynamics was introduced by Glasner and Maon
\cite{GlasnerMaon}.  A topological dynamical system $(X,T)$ is uniformly
rigid if
there is a strictly increasing sequence $(r_k)\subseteq\N$ such that
\[
 \sup_{x\in X}d(T^{r_k}x,x)\longrightarrow0.
\]
Here $d$ is any compatible metric on $X$.
Glasner and Weiss introduced the topological analogue of mild mixing via the
$\SIP^{*}$ hitting-time condition and proved its multiplier characterization;
see \cite[Definition~4.10 and Theorem~4.11]{GlasnerWeiss}.  Independently,
Huang and Ye defined mild mixing by weak disjointness from every transitive
system and developed corresponding complexity and return-time formulations; see
\cite[p.~826 and Theorems~2.3 and~6.6]{HuangYe}.  Akin and Glasner later
gave the positive-time multiplier formulation and the SIP-set realization
theorem in \cite[Theorems~3.6--3.7]{AkinGlasnerSIP}.  In particular,
\[
 \text{topological mixing}\ \Longrightarrow\
 \text{topological mild mixing}\ \Longrightarrow\
 \text{topological weak mixing}.
\]
For minimal systems Huang and Ye proved the stronger characterization that
mild mixing is equivalent to $\operatorname{IP}^{*}$-transitivity:
\[
 N(U,V)\in\operatorname{IP}^{*}
 \quad\text{for all nonempty open }U,V\subseteq X.
\]
See \cite[Theorem~5.8(2)]{HuangYe} and
\cite[Theorem~7.8(2)]{GlasnerYeLocal}.  Polynomial higher-order
consequences, together with a corresponding result for abelian group
actions, were obtained by Cao and Shao
\cite[Theorems~1.3 and~1.6]{CaoShaoPolynomial}.  The
present paper takes a different direction and develops a functional and
factor-theoretic viewpoint.

In ergodic theory, a measure-preserving system is mild mixing if and only if it has no non-trivial rigid factor.  In the topological case  
Glasner and Weiss proved that a topologically mildly mixing system has no
nontrivial uniformly rigid factor
\cite[Lemma~4.14 and Corollary~4.15]{GlasnerWeiss}.  Huang and Ye asked
whether the converse holds in the minimal category
\cite[Appendix, Question~2, p.~845]{HuangYe}.  The question was later
reiterated by Glasner and Ye
\cite[Section~16, p.~352]{GlasnerYeLocal}:
\begin{question}[Huang--Ye]
\label{ques:huang-ye}
If a minimal topological dynamical system has no nontrivial uniformly rigid
factor, must it be mildly mixing?
\end{question}
This question is the main motivation for the functional and factor
constructions below.

\subsection{Our main results}
\label{subsec:main-results}

Throughout the paper, a \emph{topological dynamical system} is a pair
$(X,T)$ in which $X$ is a nonempty compact metric space and $T:X\to X$ is a
homeomorphism; the word \emph{system} will always have this meaning.  We
regard $T$ as generating the $\Nzero$-action $(n,x)\mapsto T^n x$.
All dynamical notions below, including transitivity and mild mixing, refer to
this action.  Accordingly, IP-, SIP-, and hitting-time sets are positive-time
subsets of $\N=\{1,2,\ldots\}$.  Since $T$ is a homeomorphism, negative
iterates are also available and will be used explicitly in invariant
algebras and bilateral orbit-name factors.  Write
$C(X)=C(X,\mathbb R)$, and call its elements continuous observables; the
notation $C(X,[0,1])$ has the analogous meaning.  For a bounded function
$h$ and a nonempty set $E\subseteq X$, put
\[
 \|h\|_E=\sup_{x\in E}|h(x)|.
\]
In particular, $\|h\|_\infty=\|h\|_X$.  Let $\Fin$ denote the collection
of all nonempty finite subsets of $\N$, ordered by
$\eta<\alpha$ when $\max\eta<\min\alpha$.
A family $\mathcal G\subseteq C(X)$ is \emph{point-separating} if, for every
distinct $x,y\in X$, some $f\in\mathcal G$ satisfies $f(x)\neq f(y)$.
The symbol $\mathbf1$ denotes the constant-one function, while
$\mathbf1_A$ denotes the indicator of a set $A$.  All algebras below are
real subalgebras of $C(X)$, and \emph{unital} means that they contain
$\mathbf1$.  An algebra $\mathcal A$ is $T^{\pm1}$-invariant if
$f\circ T^j\in\mathcal A$ for every $f\in\mathcal A$ and $j\in\Z$.
The notation $\IPlim_{\alpha\in\Fin}$ denotes convergence along
the full tails $\{\alpha\in\Fin:\alpha>\eta\}$; further IP conventions
are recalled in Subsection~\ref{subsec:ip-systems}.

For a strictly increasing sequence $\mathbf r=(r_k)\subseteq\N$, let
\[
 \mathcal A_{\mathbf r}(X,T)
 =\{f\in C(X):
   \|f\circ T^{r_k}-f\|_\infty\longrightarrow0\}.
\]

\begin{thma}
\label{thmA:intro}
Let $(X,T)$ be a topological dynamical system.  Then $(X,T)$ is uniformly
rigid if and only if there is a strictly increasing sequence
$(r_k)\subseteq\N$ and a countable point-separating family
$\mathcal G\subseteq C(X)$ such that
\[
 \|f\circ T^{r_k}-f\|_\infty\longrightarrow0
 \qquad \forall f\in\mathcal G.
\]
For a fixed such sequence $\mathbf r$, the algebra
$\mathcal A_{\mathbf r}(X,T)$ is a closed unital
$T^{\pm1}$-invariant subalgebra of $C(X)$.  It determines the maximal
factor of $(X,T)$ which is uniformly rigid along $\mathbf r$.
\end{thma}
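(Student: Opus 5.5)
The proof has three parts. First the equivalence, then the algebraic properties of $\mathcal A_{\mathbf r}(X,T)$, then the factor correspondence via Gelfand duality for closed unital subalgebras of $C(X)$.

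\textbf{Equivalence.} If $(X,T)$ is uniformly rigid along $\mathbf r$, then every $f\in C(X)$ lies in $\mathcal A_{\mathbf r}(X,T)$. Indeed, $f$ is uniformly continuous on the compact space $X$, so $\sup_x d(T^{r_k}x,x)\to0$ forces $\|f\circ T^{r_k}-f\|_\infty\to0$. Since $C(X)$ is separable, any countable dense subset is a countable point-separating family $\mathcal G$.

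For the converse, enumerate $\mathcal G=\{g_1,g_2,\dots\}$ and replace each $g_n$ by $g_n/(1+\|g_n\|_\infty)$; this keeps rigidity and point separation. Define
\[
 \rho(x,y)=\sum_n 2^{-n}|g_n(x)-g_n(y)|.
\]
This is a continuous metric on $X$, because $\mathcal G$ separates points. A continuous metric on a compact metric space is compatible, since the identity map from $(X,d)$ to $(X,\rho)$ is a continuous bijection from a compact space to a Hausdorff space. Moreover $\sup_x\rho(T^{r_k}x,x)\le\sum_n 2^{-n}\|g_n\circ T^{r_k}-g_n\|_\infty$, which tends to $0$ by dominated convergence for series (tails are uniformly at most $2\cdot 2^{-N}$). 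Uniform rigidity is independent of the choice of compatible metric, again by uniform continuity of the identity map between the two metrics. This settles the first claim.

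\textbf{Algebra properties.} Constants are trivially rigid. Linear combinations are handled by the triangle inequality. For products, write
\[
 (fg)\circ T^{r}-fg=(f\circ T^r)(g\circ T^r-g)+(f\circ T^r-f)g,
\]
and bound the norms using $\|f\circ T^r\|_\infty=\|f\|_\infty$. Closedness follows from an $\varepsilon/3$ argument, since composition with $T^{r_k}$ is an isometry of $C(X)$. For $T^{\pm1}$-invariance, use that composition with $T^j$ is an isometry and commutes with composition with $T^{r_k}$:
\[
 \|f\circ T^j\circ T^{r_k}-f\circ T^j\|_\infty=\|(f\circ T^{r_k}-f)\circ T^j\|_\infty=\|f\circ T^{r_k}-f\|_\infty .
\]

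\textbf{Factor.} This step is the one that needs care. Let $\mathcal A=\mathcal A_{\mathbf r}(X,T)$. Define $x\sim y$ iff $f(x)=f(y)$ for all $f\in\mathcal A$, and let $Y=X/\!\sim$ with quotient map $\pi$. Because $\mathcal A$ is a closed unital subalgebra, Stone--Weierstrass identifies $\pi^*C(Y)$ with $\mathcal A$. Concretely, $\pi^*C(Y)$ is a closed point-separating unital algebra on $Y$, hence all of $C(Y)$, so $\mathcal A=\{h\circ\pi\}$. The space $Y$ is compact, Hausdorff, and metrizable, since $\mathcal A$ is separable as a subspace of $C(X)$. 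The $T^{\pm1}$-invariance of $\mathcal A$ makes $\sim$ invariant under $T$ and $T^{-1}$, so $T$ descends to a homeomorphism $S$ of $Y$ with $\pi\circ T=S\circ\pi$.

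Each $h\in C(Y)$ satisfies $\|h\circ S^{r_k}-h\|_Y=\|(h\circ\pi)\circ T^{r_k}-h\circ\pi\|_\infty\to0$. Taking a countable dense subset of $C(Y)$ and applying the first part shows that $(Y,S)$ is uniformly rigid along $\mathbf r$.

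For maximality, let $\phi:(X,T)\to(Z,R)$ be any factor that is uniformly rigid along $\mathbf r$. Every $h\in C(Z)$ is then rigid along $\mathbf r$ (by uniform continuity, as in the first part), so $h\circ\phi\in\mathcal A$. Hence $\phi^*C(Z)\subseteq\mathcal A=\pi^*C(Y)$. This inclusion means $\phi$ is constant on the fibres of $\pi$, so $\phi$ factors as $\psi\circ\pi$. The map $\psi$ is continuous by the quotient topology and is equivariant. So $(Z,R)$ is a factor of $(Y,S)$, which is the required maximality.
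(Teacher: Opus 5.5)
Your proof is correct and follows essentially the paper's argument (Theorem~\ref{thm:maximal-rigid-factor} together with Lemma~\ref{lem:factor-algebra}). It uses the same product and closure estimates, the same translation identity for $T^{\pm1}$-invariance, the factor given by the common level sets of $\mathcal A_{\mathbf r}$ and identified with that algebra by Stone--Weierstrass, and maximality via the quotient's universal property. The only difference is the order of the steps. You prove the ``if'' direction first, directly with the metric $\sum_n 2^{-n}|g_n(x)-g_n(y)|$ on $X$, and then reuse it for the factor. The paper instead realizes the factor concretely in $[-1,1]^{\N}$, proves its uniform rigidity with the coordinate metric, and obtains the ``if'' direction from the injectivity of $\pi_{\mathbf r}$.
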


The precise factor statement and its proof are given in
Theorem~\ref{thm:maximal-rigid-factor}.

Let $(r_i)\subseteq\N$ be strictly increasing.  A continuous function is
\emph{locally SIP-rigid} if it returns uniformly to itself on one nonempty
open set along all sufficiently late symmetric finite-sums differences.

\begin{thma}
\label{thmB:intro}
A topological dynamical system is mildly mixing if and only if it has no
nonconstant locally SIP-rigid continuous function.
\end{thma}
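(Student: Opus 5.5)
The plan is to use the Glasner--Weiss definition: $(X,T)$ is mildly mixing when $N(U,V)=\{n\in\N: U\cap T^{-n}V\neq\emptyset\}$ is an $\SIP^{*}$ set for all nonempty open $U,V$. Here an SIP-set generated by $(r_i)$ is the set of differences $r_\alpha-r_\beta>0$ with $\alpha,\beta\in\Fin$ and $\alpha\cap\beta=\emptyset$ (the symmetric finite sums). By Glasner--Weiss \cite[Theorem~4.11]{GlasnerWeiss} this is equivalent to the multiplier property, and it is equivalent to the Akin--Glasner realization \cite[Theorems~3.6--3.7]{AkinGlasnerSIP}. A function $f$ is locally SIP-rigid if there are a nonempty open $W$ and a sequence $(r_i)$ such that
\[
 \|f\circ T^{n}-f\|_W\to0
\]
as $n$ runs through the symmetric differences built from index sets beyond $\eta$ and $\eta\to\infty$. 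I prove the two directions separately, each by contraposition.

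\emph{Mild mixing implies no nonconstant locally SIP-rigid function.} Suppose $f$ is nonconstant and locally SIP-rigid on $W$ along $(r_i)$. I first want $f$ nonconstant on $W$. If $f$ is constant on $W$, I use transitivity of $T$ (mild mixing implies weak mixing) to pick $m$ with $T^mW\cap W\neq\emptyset$ chosen so that $f$ is nonconstant on a transported set, or alternatively I shrink to an open set where $f$ varies. This is the delicate point; I expect to handle it by noting that $f\circ T^{j}$ is also locally rigid on $T^{-j}W$, and that the orbit of $W$ covers a dense set. Granting that $f$ varies on $W$, pick $a<b$ and nonempty open $U,V\subseteq W$ with $f<a$ on $U$ and $f>b$ on $V$. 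For late symmetric differences $n$ we have $\|f\circ T^n-f\|_W<(b-a)/2$. For $x\in U\subseteq W$ this gives $f(T^nx)<a+(b-a)/2$, so $T^nx$ avoids the open set $V'=\{f>b'\}\cap W$ whenever $b'$ is close to $b$. Here I need the target to lie in $W$. Thus $N(U,V')$ misses a tail SIP-set, which is again an SIP-set. Hence $N(U,V')$ is not $\SIP^{*}$, contradicting mild mixing.

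\emph{No nonconstant locally SIP-rigid function implies mild mixing.} Suppose $N(U,V)$ is not $\SIP^{*}$. Then its complement contains an SIP-set generated by some $(r_i)$, so $U\cap T^{-n}V=\emptyset$ along all symmetric sums. I need to upgrade this to uniform closeness for some function. First I will try a Hindman/Milliken--Taylor style diagonal refinement: for a point $x_0$ and an idempotent-type limit, take $\IPlim T^{r_\alpha}$ along a subsequence of $(r_i)$, so that the $T^{r_\alpha}$ converge pointwise (by compactness and a diagonal argument on a countable dense set) to a map $\phi$. Then I build $f\in C(X,[0,1])$ with $f=1$ near $\overline{U_0}$ and $f=0$ near $\overline{V_0}$ for smaller open sets $U_0\subseteq U$, $V_0\subseteq V$. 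I modify $f$ so that it is rigid on $U_0$. The natural candidate is
\[
 g(x)=\sup_{n\in S}\mathbf1\text{-smoothed distance of }T^nx\text{ to }V,
\]
with $S$ the SIP-set union $\{0\}$. The orbit of $U$ under $S$ avoids $V$, which makes $g$ nonconstant. Rigidity would come from the closure properties of SIP-sets: $S-n\supseteq S$ on tails up to late indices.

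The main obstacle is this construction. Continuity of such a supremum and the uniformity on an open set both need care, and I would instead appeal to the Akin--Glasner realization theorem. That theorem says that if mild mixing fails, there is a nontrivial transitive system $(Y,S)$, or a symbolic SIP-system, such that $X\times Y$ is not transitive. Via a factor argument in the style of Theorem~\ref{thm:maximal-rigid-factor}, this yields a continuous function on $X$, built from an invariant closed set of $X\times Y$ with nonempty interior, that is locally rigid along the SIP-set. That step is where I expect the real work.
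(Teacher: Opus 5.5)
\textbf{Gap in the converse direction.} In the direction ``not mildly mixing implies a locally SIP-rigid function exists'', no function is ever produced. Your candidate with $f=1$ near $\overline{U_0}$ and $f=0$ near $\overline{V_0}$ points the wrong way: $T^nx\notin V$ does not force $f(T^nx)=1$. That is what pushes you to the supremum construction, and then to an unspecified factor argument through the Akin--Glasner realization theorem.

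The missing idea is that the SIP-hole already gives an \emph{exact} locally rigid function, once the bump is placed on the target set. Suppose $N(A,B)\cap\SIP(r_i:i>0)=\emptyset$ and there are distinct $a\in A$, $b\in B$.
\begin{enumerate}
\item Take disjoint open sets $U\ni a$ with $U\subseteq A$, and $O\ni b$ with $\overline O\subseteq B$.
\item Take a continuous $f$ with $f(b)=1$ and $f=0$ off $O$.
\item For $x\in U$ and $n\in\SIP(r_i:i>0)$ you get $T^nx\notin B$, so $f(T^nx)=0=f(x)$.
\end{enumerate}
Thus $f$ is nonconstant and $\|f\circ T^n-f\|_U=0$ on the whole SIP-set.

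This needs a point of $B$ distinct from a point of $A$, so the case $A=B=\{x\}$ with $x$ isolated must be handled separately. Here $x$ cannot be periodic. If its period were $q$, its return set would be $q\N$, and $q\N\in\SIP^{*}$, since two of the partial sums $0,s_1,\dots,s_1+\dots+s_q$ are congruent mod $q$. Hence $f=\mathbf 1_{\{T^{-1}x\}}$ with $U=\{x\}$ satisfies $f(T^nx)=f(x)=0$ for all $n\geq1$. Nothing beyond the $\SIP^{*}$ criterion of Lemma~\ref{lem:sip-multiplier} is needed: no idempotent limits and no multiplier or realization argument.

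\textbf{The forward direction.} Here your ``delicate point'' is illusory and should be removed rather than resolved. Smallness of $\|f\circ T^n-f\|_W$ controls $f(T^nx)$ for every $x\in W$, wherever $T^nx$ lands. So the target set need not lie in $W$, and $f$ need not vary on $W$. Take $x_0\in W$ and $y\in X$ with $\epsilon=|f(y)-f(x_0)|>0$; this is possible because $f$ is nonconstant on $X$. Shrink to open sets $U_0\ni x_0$ inside $W$ and $V\ni y$ with $|f(v)-f(u)|>\epsilon/2$ on $U_0\times V$. Then $N(U_0,V)$ misses a tail SIP-set. With this correction your forward argument coincides with the paper's.

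Also allow $\beta=\emptyset$ in your symmetric differences, i.e.\ use $\operatorname{FS}_0$. With $\alpha,\beta\in\Fin$ only, your SIP-sets are smaller than those in the normalization behind Lemma~\ref{lem:sip-multiplier}. The forward direction would then need a stronger return-time property than mild mixing is known to provide.
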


Theorem~\ref{thmB:intro} is proved as
Theorem~\ref{thm:mildmixing-sip-functions}.  Its proof is a functional
dualization of the Glasner--Weiss $\SIP^{*}$ criterion.

For a minimal system, symmetric finite-sums differences may be replaced by
ordinary finite sums.  A continuous function $f$ is \emph{locally
IP-rigid} if, for some nonempty open $U$ and some sequence
$(p_i)\subseteq\N$,
\[
 \IPlim_{\alpha\in\Fin}
 \|f\circ T^{p_\alpha}-f\|_U=0.
\]

\begin{thma}
\label{thmC:intro}
A minimal topological dynamical system is mildly mixing if and only if it has
no nonconstant locally IP-rigid continuous function.
\end{thma}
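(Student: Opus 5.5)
We will deduce Theorem~\ref{thmC:intro} from the Huang--Ye characterization of mild mixing for minimal systems: a minimal system is mildly mixing iff $N(U,V)\in\operatorname{IP}^{*}$ for all nonempty open $U,V$. Both implications are dualized through Urysohn functions, much as Theorem~\ref{thmB:intro} dualizes the Glasner--Weiss $\SIP^{*}$ criterion.

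\emph{Mild mixing $\Rightarrow$ no nonconstant locally IP-rigid function.} Suppose $f$ is nonconstant and $\IPlim_{\alpha}\|f\circ T^{p_\alpha}-f\|_U=0$.
\begin{itemize}
\item Since $X$ is minimal, $\bigcup_{j\ge0}T^{-j}U=X$, and by compactness $X=\bigcup_{j=0}^{m}T^{-j}U$.
\item The property transfers to the sets $T^{-j}U$: because $T^{p_\alpha}$ commutes with $T^j$, we have $\|f\circ T^j\circ T^{p_\alpha}-f\circ T^j\|_{T^{-j}U}=\|f\circ T^{p_\alpha}-f\|_U$.
\item This alone does not make $f$ itself rigid on $T^{-j}U$. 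The more direct route is to use that $f$ is nonconstant on $U$ itself.
\end{itemize}
We may assume $f$ is nonconstant on $U$. If $f$ is constant on $U$, replace $U$ by an open set on which $f$ is nonconstant; minimality ensures some $T^{-j}U$ carries oscillation of $f\circ T^{j}$. Note also that $f\circ T^{j}$ is locally IP-rigid on $T^{-j}U$ and is nonconstant exactly when $f$ is. So after renaming we have a function nonconstant on $U$.

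Now pick $a<b$ in $f(U)$ with $b-a=3\varepsilon$, and set
\[
U_1=U\cap\{f<a+\varepsilon\},\qquad V=\{f>b-\varepsilon\}.
\]
Both are open and nonempty, and $V$ meets $U$. For $\alpha$ beyond some $\eta$ and $x\in U_1$ we have $|f(T^{p_\alpha}x)-f(x)|<\varepsilon$, so $T^{p_\alpha}x\notin V$. Hence $N(U_1,V)$ misses the IP-set $\{p_\alpha:\alpha>\eta\}$, contradicting $\operatorname{IP}^{*}$-transitivity.

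\emph{No nonconstant locally IP-rigid function $\Rightarrow$ mild mixing.} Suppose mild mixing fails. By Huang--Ye, there are nonempty open $U,V$ and an IP-set $\operatorname{FS}(q_i)$ with $T^{q_\alpha}U\cap V=\emptyset$ for all $\alpha$.

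The goal is to manufacture a nonconstant $f$ rigid on a smaller open set. The plan is:
\begin{itemize}
\item Choose $f\in C(X,[0,1])$ with $f=1$ on a closed ball inside $V$ and $f=0$ off $V$.
\item Use minimality to move $V$ so that it overlaps $U$: there is a $k$ with $W=U\cap T^{-k}V\neq\emptyset$, and we replace $f$ by $f\circ T^{k}$. Then for $x\in W$ and every $\alpha$, we get $f(T^{q_\alpha}x)=0$ while $f(x)$ can be $1$. This makes $f$ badly non-rigid, which is the wrong direction.
\item Instead, take $f$ vanishing on $U$'s complement-side target. Let $f$ be supported in $V$, and use the open set $U'=U\setminus\overline{V'}$, where $V'\Subset V$ contains the support of $f$. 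On $U'$ we have $f=0$, and $f\circ T^{q_\alpha}=0$ by disjointness. Hence $\|f\circ T^{q_\alpha}-f\|_{U'}=0$ for all $\alpha$.
\end{itemize}
This requires $U'\neq\emptyset$, which holds after shrinking $V$ if needed, since $X$ is infinite (minimal and not a single point, or trivially mild mixing otherwise). The resulting $f$ is nonconstant and locally IP-rigid, giving the contradiction.

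The first direction is the real obstacle. The trivial construction in the second direction is suspicious, since it suggests the definition of locally IP-rigid in the paper presumably also demands something precluding $f$ constant on $U$ (e.g. $f|_U$ nonconstant), and then the work lies there. In that case I would, after a Hindman refinement, use minimality plus a uniformly recurrent point in $U$ to obtain a sub-IP-system along which a whole open set $W\subseteq U$ returns into $U$ while avoiding $V$. Then I would build $f$ from distance to $\overline V$ composed with return data, so that $f$ varies on $W$.
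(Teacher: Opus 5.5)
Your strategy is the paper's. Both directions dualize the Huang--Ye $\operatorname{IP}^{*}$ criterion (Lemma~\ref{lem:minimal-ipstar}), and your converse witness, a bump supported in $V$ that vanishes on an open piece of $U$, is exactly the paper's exact IP-flat bump. However, two steps fail as written.

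\textbf{The reduction in the forward direction is invalid.} You claim that one "may assume $f$ is nonconstant on $U$", but this cannot be arranged. For $x\in T^{-j}U$ one has $(f\circ T^{j})(x)=f(T^{j}x)$ with $T^{j}x\in U$. So $f\circ T^{j}$ takes on $T^{-j}U$ precisely the values $f(U)$, and renaming never creates oscillation. Meanwhile, $f$ itself is not known to be rigid on any other open set.

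This is not a harmless corner case. Functions constant on $U$ are exactly the IP-flat bumps your own converse produces, so the forward argument as written does not exclude the relevant obstructions.

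The repair is to delete the reduction; nothing in your argument needs it. The definition only requires $f$ to be nonconstant on $X$. Pick $x_0\in U$ and $y\in X$ with $f(y)\neq f(x_0)$, replacing $f$ by $1-f$ if necessary so that $b=f(y)>a=f(x_0)$. Then run your $U_1,V$ argument; $V$ need not meet $U$. This is exactly how the paper argues.

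\textbf{The converse mishandles finite systems.} You assert that $X$ is infinite because it is "minimal and not a single point". But a periodic orbit of period $q\geq2$ is minimal, finite, and not mildly mixing. There the Huang--Ye pair could a priori be $U=V=\{x\}$, and then $U\setminus\overline{V'}$ is empty for every admissible $V'$. You can close this in either of two ways.
\begin{enumerate}
\item Show this pair cannot occur. We have $N(\{x\},\{x\})=q\N$, and $q\N\in\operatorname{IP}^{*}$ by pigeonholing the partial sums $0,p_1,p_1+p_2,\ldots,p_1+\cdots+p_q$ modulo $q$.
\item Treat the finite case directly, as the paper does, with $U=\{x\}$, $f=\mathbf 1_{\{Tx\}}$ and $p_i=q$.
\end{enumerate}
For infinite $X$, your choice of distinct $a\in U$ and $b\in V$ is legitimate, since an infinite minimal system has no isolated points.

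\textbf{Your closing worry is unfounded.} The paper's definition demands nonconstancy only on $X$, so the "trivial" construction is the intended witness. No Hindman refinement or return-data construction is needed.
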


This is Theorem~\ref{thm:minimal-ip-functions}.  Theorems~\ref{thmB:intro}
and~\ref{thmC:intro} are
functional dualizations of the known $\SIP^{*}$- and
$\operatorname{IP}^{*}$-return-time criteria.  The factor structure
carried by the resulting local observables is described next.

For a nonempty open set $U\subseteq X$ and a sequence
$\mathbf p=(p_i)\subseteq\N$, put
\[
 \mathcal L(U,\mathbf p)
 =\left\{f\in C(X):
   \IPlim_{\alpha\in\Fin}
   \|f\circ T^{p_\alpha}-f\|_U=0\right\}
\]
and let $\mathcal C(U,\mathbf p)$ be the largest
$T^{\pm1}$-invariant subalgebra of $\mathcal L(U,\mathbf p)$.
Here $\mathbb R\mathbf1$ denotes the algebra of real constant
functions.

\begin{thma}
\label{thmD:intro}
Let $(X,T)$ be minimal.  Every nonconstant locally IP-rigid observable
determines a canonical nontrivial factor carrying marked local IP-rigidity
data.  For every pair $(U,\mathbf p)$, the invariant core
$\mathcal C(U,\mathbf p)$ determines the maximal factor whose pullback
algebra is contained in $\mathcal L(U,\mathbf p)$; this factor is uniformly
rigid whenever the core is nontrivial.  Moreover,
\begin{align*}
 (X,T)\text{ is not mildly mixing}
 &\Longleftrightarrow
 \mathcal L(U,\mathbf p)\neq\mathbb R\mathbf1
 \text{ for some }(U,\mathbf p),\\
 (X,T)\text{ has a nontrivial uniformly rigid factor}
 &\Longleftrightarrow
 \mathcal C(U,\mathbf p)\neq\mathbb R\mathbf1
 \text{ for some }(U,\mathbf p).
\end{align*}
\end{thma}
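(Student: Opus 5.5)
We plan to prove Theorem~\ref{thmD:intro} in four steps, taking Theorems~\ref{thmA:intro} and~\ref{thmC:intro} as available.

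\emph{Step 1: structure of $\mathcal L(U,\mathbf p)$ and $\mathcal C(U,\mathbf p)$.} First we check that $\mathcal L(U,\mathbf p)$ is a closed unital subalgebra of $C(X)$. For products we use
\[
 \|fg\circ T^{p_\alpha}-fg\|_U\le\|f\|_\infty\|g\circ T^{p_\alpha}-g\|_U+\|g\|_\infty\|f\circ T^{p_\alpha}-f\|_U .
\]
Closedness follows by an $\varepsilon/3$ argument. The invariant core is then
\[
 \mathcal C(U,\mathbf p)=\{f:\ f\circ T^j\in\mathcal L(U,\mathbf p)\ \forall j\in\Z\}.
\]
This set is visibly a closed unital $T^{\pm1}$-invariant algebra, and every $T^{\pm1}$-invariant subalgebra of $\mathcal L(U,\mathbf p)$ lies inside it, so it is the largest one. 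By Gelfand duality, $\mathcal C(U,\mathbf p)$ equals $\pi^*C(Y)$ for a factor $\pi:(X,T)\to(Y,S)$. Any factor whose pullback algebra lies in $\mathcal L(U,\mathbf p)$ has an invariant pullback algebra, hence one contained in $\mathcal C(U,\mathbf p)$. So $Y$ is the maximal such factor.

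\emph{Step 2: the core gives uniform rigidity (the main obstacle).} Assume the core is nontrivial and fix $g\in C(Y)$. For every $j\in\Z$ the function $g\circ S^j\circ\pi$ lies in $\mathcal L(U,\mathbf p)$. Rewriting, $\|g\circ S^{p_\alpha}-g\|$ is small on $S^j\pi(U)$ for all late $\alpha$. The difficulty is that "late" depends on $j$. Minimality gives finitely many $j_1,\dots,j_m$ with $\bigcup_l T^{j_l}U=X$, so $\bigcup_l S^{j_l}\pi(U)=Y$.

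For each $l$ we need the case $j=-j_l$, i.e.\ $f\circ T^{-j_l}$ small on $U$. This says $f$ is small on $T^{-j_l}U$; choosing the sign convention correctly, the sets $T^{-j_l}U$ also cover $X$. Take a finite intersection of the tails $\{\alpha>\eta_l\}$. Then $\|g\circ S^{p_\alpha}-g\|_\infty\to0$ along the IP-limit, for each fixed $g$.

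Apply this to a countable dense family in $C(Y)$. A diagonal choice of $\alpha_1<\alpha_2<\cdots$ gives a sequence $r_k=p_{\alpha_k}$ that is strictly increasing, since the $\alpha_k$ are increasing disjoint blocks and hence the $p_{\alpha_k}$ can be made increasing. Theorem~\ref{thmA:intro} then shows $Y$ is uniformly rigid. Nontriviality of $Y$ is exactly nontriviality of the core.

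\emph{Step 3: the two equivalences.} The first equivalence follows from Theorem~\ref{thmC:intro}, since the definition of locally IP-rigid says exactly that some $\mathcal L(U,\mathbf p)$ contains a nonconstant function.

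For the second equivalence, "$\Leftarrow$" is Step 2. For "$\Rightarrow$", let $\pi:X\to Y$ be a nontrivial factor that is uniformly rigid along $(r_k)$. Pass to a subsequence so that $\sum_k\sup_y d(S^{r_k}y,y)$ is small at tails, and put $p_i=r_{k_i}$. Choose the $k_i$ rapidly increasing, so that for every $g$ in a countable dense set $\|g\circ S^{p_\alpha}-g\|_\infty\le\sum_{i\in\alpha}\|g\circ S^{p_i}-g\|_\infty$. This bound uses the telescoping $g\circ S^{a+b}-g=(g\circ S^a-g)\circ S^b+(g\circ S^b-g)$ together with isometric invariance of the sup norm. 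Density of the countable set plus an $\varepsilon$-argument then extends the estimate to all $g$. Hence $\pi^*C(Y)\subseteq\mathcal L(X,\mathbf p)\subseteq\mathcal L(U,\mathbf p)$, and $\pi^*C(Y)$ is invariant, so it lies in $\mathcal C(U,\mathbf p)$.

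\emph{Step 4: the orbit-name factor.} Given a nonconstant locally IP-rigid $f$ with data $(U,\mathbf p)$, define
\[
 \Phi(x)=(f(T^nx))_{n\in\Z}\in\mathbb R^{\Z}.
\]
This is a factor map onto a subshift-like system, and it is nontrivial because $f$ is nonconstant. The marked data consist of the zeroth coordinate together with $(\Phi(U),\mathbf p)$, and the zeroth-coordinate function is IP-rigid on $\Phi(U)$. Canonicity holds because $\Phi$ is determined by $f$, and its pullback algebra is the closed algebra generated by $\{f\circ T^n\}$.
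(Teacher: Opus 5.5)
Your Steps 1--3 are sound, but Step 4 does not produce marked data in the sense of Definition~\ref{def:marked-local-factor}. There the marked set must be a nonempty \emph{open} subset of $X_f$, and $\Phi(U)$ is in general not open, because factor maps need not be open maps. For instance, the factor map from a Sturmian subshift onto its circle rotation sends a cylinder, which is clopen, onto a compact proper subset of the connected circle $\Torus$; such a set cannot be open.

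What you actually verified is that $\widehat f(y)=y_0$ is IP-rigid on the set $\Phi(U)$. The missing step is that this set has nonempty interior, and this is exactly where minimality of $(X,T)$ is needed; your Step 4 never uses it. The paper supplies this step by a Baire category argument:
\begin{itemize}
\item Choose a nonempty open $V$ with $\overline V\subseteq U$.
\item Minimality and compactness give $X=\bigcup_{\ell=1}^{s}T^{-a_\ell}V$, so $X_f=\bigcup_{\ell=1}^{s}\sigma^{-a_\ell}\Phi(\overline V)$ is a finite union of closed sets.
\item By Baire, one of these sets has interior, hence $W=\Int\Phi(\overline V)\neq\emptyset$.
\item For $y=\Phi(x)\in W$ with $x\in\overline V\subseteq U$, one gets $|\widehat f(\sigma^{p_\alpha}y)-\widehat f(y)|\leq\|f\circ T^{p_\alpha}-f\|_U$.
\end{itemize}
Passing to $\overline V$ is what makes the covering sets closed, so that Baire applies.

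Outside Step 4, your route to uniform rigidity of the core factor differs from the paper's and is more direct. The paper again uses Baire to find an open $W\subseteq q(\overline V)$ in the core factor. It then bounds the coordinate metric on $W$ by the local norms on $U$ and globalizes the metric through Proposition~\ref{prop:metric-globalization}.

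You instead use invariance of the core itself. Since $\|f\circ T^{p_\alpha}-f\|_{T^{j}U}=\|(f\circ T^{j})\circ T^{p_\alpha}-f\circ T^{j}\|_U$ and finitely many translates $T^{j_\ell}U$ cover $X$, every member of $\mathcal C(U,\mathbf p)$ is globally IP-rigid. Together with the obvious reverse inclusion, this shows $\mathcal C(U,\mathbf p)=\mathcal L(X,\mathbf p)$ for every nonempty open $U$ in a minimal system. The paper does not state this explicitly, and it yields Theorem~\ref{thm:core-factor}(iii) without Baire category.

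Two smaller remarks on Step 2: your sign bookkeeping is muddled but harmless, since all $j\in\Z$ are available. The diagonal choice is also unnecessary, because any blocks $\beta_k$ moving to infinity with increasing sums $p_{\beta_k}$ serve every $g$ at once.

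In Step 3, the telescoping bound holds for every $g$ and every $\alpha$ regardless of how the $k_i$ are chosen. What the choice must secure is that the right-hand side is small on tails. The cleanest way is the paper's: take $D(S^{p_i},\id_Y)<2^{-i}$, apply the subadditivity \eqref{eq:uniform-subadditivity}, and use uniform continuity of $g$.
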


The marked-factor assertion is Theorem~\ref{thm:canonical-marked-factor},
the invariant-core construction is Theorem~\ref{thm:core-factor}, and the
two equivalences are Corollary~\ref{cor:exact-core-form}.
Theorem~\ref{thmD:intro} does not
resolve Question~\ref{ques:huang-ye}; rather, it identifies the unresolved
step as the passage from a nontrivial local rigidity algebra to a nontrivial
invariant core.

\subsection*{Organization of the paper}
The paper is organized as follows.  Section~\ref{sec:preliminaries} recalls
the families of positive integers, topological dynamics, IP-systems, and the classical
mild-mixing criteria used later.  Section~\ref{sec:theorem-a} proves
Theorem~\ref{thmA:intro} and studies factors generated by globally rigid
observables.  Section~\ref{sec:theorem-b} proves Theorem~\ref{thmB:intro}
and explains precisely why
globally rigid functions alone do not presently characterize minimal mild
mixing.  Section~\ref{sec:theorem-c} proves Theorem~\ref{thmC:intro}.
Section~\ref{sec:factors} proves Theorem~\ref{thmD:intro} and reformulates
Question~\ref{ques:huang-ye} in terms of invariant cores.

\section{Preliminaries}
\label{sec:preliminaries}

This section fixes the notation and background used throughout the paper.  We
review the relevant families of positive integers, the standing conventions
for topological dynamical systems, the IP-system formalism, and the classical
return-time criteria for topological mild mixing.

\subsection{Families of positive integers}
\label{subsec:families}

Throughout,
\[
 \N=\{1,2,\ldots\},\qquad
 \Nzero=\{0,1,2,\ldots\},\qquad
 \Z=\{\ldots,-1,0,1,\ldots\},
\]
and $\Fin$ denotes the collection of all nonempty finite subsets of
$\N$.  

Every dynamical system in this paper carries the $\Nzero$-action
generated by its defining homeomorphism, while all Furstenberg families and
hitting-time sets below consist of positive times and are therefore families
of subsets of $\N$.  Whenever $\Z$ occurs in dynamical notation, it refers
to inverse iterates or bilateral orbit coordinates, not to the acting
semigroup.  A \emph{Furstenberg family} is a nonempty proper collection
$\mathcal G\subseteq\mathcal P(\N)$ that is upward hereditary.  Its dual is
\[
 \mathcal G^{*}
 =\{A\subseteq\N:A\cap G\neq\emptyset
     \text{ for every }G\in\mathcal G\}.
\]
See \cite{AkinRecurrence,FurstenbergBook} for the general family formalism.

For $\alpha,\beta\in\Fin$, write $\alpha<\beta$ when
$\max\alpha<\min\beta$.  A sequence
$\alpha_1<\alpha_2<\cdots$ is called a \emph{block sequence}.

For a sequence $(p_i)_{i\geq1}\subseteq\N$, put
\[
 p_\alpha=\sum_{i\in\alpha}p_i,
 \qquad
 \operatorname{FS}(p_i)=\{p_\alpha:\alpha\in\Fin\}.
\]
For $M\in\Nzero$, the notation
$\operatorname{FS}(p_i:i>M)$ means that only indices greater than $M$ are
used in the finite sums.
A set $F\subseteq\N$ is an \emph{IP-set} if it contains
$\operatorname{FS}(p_i)$ for some sequence $(p_i)\subseteq\N$.  Unless
explicitly stated otherwise, no monotonicity or injectivity is imposed on an
IP-generating sequence.  The dual of the family of IP-sets is denoted by
$\operatorname{IP}^{*}$.  Thus
\[
 A\in\operatorname{IP}^{*}
 \quad\Longleftrightarrow\quad
 A\cap F\neq\emptyset\text{ for every IP-set }F.
\]
The basic references are \cite{FurstenbergBook,Hindman}.

Let $(r_i)_{i\geq1}\subseteq\N$ be strictly increasing.  For
$M\in\Nzero$, define
\[
 \operatorname{FS}_0(r_i:i>M)
 =\{0\}\cup
 \left\{\sum_{i\in\alpha}r_i:
   \emptyset\neq\alpha\subseteq\{M+1,M+2,\ldots\}
   \text{ is finite}\right\}
\]
and
\begin{equation}
 \SIP(r_i:i>M)
 =\{a-b>0:
   a,b\in\operatorname{FS}_0(r_i:i>M)\}.
 \label{eq:tail-sip}
\end{equation}
A set containing $\SIP(r_i:i>0)$ for some strictly increasing sequence
$(r_i)\subseteq\N$ is an \emph{SIP-set}.  Its dual family is denoted by
$\SIP^{*}$.  We use the
normalization of Glasner--Weiss and Akin--Glasner
\cite[Section~4]{GlasnerWeiss} and
\cite[Section~1]{AkinGlasnerSIP}.

Every SIP-set is an IP-set, because in
\eqref{eq:tail-sip} one may take $b=0$; hence
$\operatorname{FS}(r_i)\subseteq\SIP(r_i:i>0)$.  Consequently,
\[
 \operatorname{IP}^{*}\subseteq\SIP^{*}.
\]

\subsection{Basics of topological dynamics}
\label{subsec:top-dynamics}

A \emph{topological dynamical system} is a pair $(X,T)$, where $X$ is a
nonempty compact metric space and $T:X\to X$ is a homeomorphism.  We write
$(X,T)$ for the $\Nzero$-action generated by $T$, and use \emph{system} as
shorthand for a topological dynamical system.  Thus the action iterates are
$T^n$ with $n\in\Nzero$.  Invertibility also makes $T^j$ available for
$j\in\Z$, and such inverse iterates will always be indicated explicitly.
Whenever metric notation is used, $d$ denotes a fixed compatible metric on
the relevant space.  For nonempty sets $A,B\subseteq X$, write
\[
 d(x,A)=\inf_{a\in A}d(x,a),
 \qquad
 \dist(A,B)=\inf_{a\in A,\,b\in B}d(a,b),
\]
and let $\Int A$ denote the interior of $A$.  We use $\Torus$ for the circle
group, identified with both $\mathbb R/\mathbb Z$ and the unit circle in
$\mathbb C$, and write $\|t\|_{\Torus}$ for the distance from
$t\in\mathbb R/\mathbb Z$ to zero.  A real-valued function $h$ on $X$ is
\emph{Lipschitz} if there is $L\geq0$ such that
$|h(x)-h(y)|\leq Ld(x,y)$ for all $x,y\in X$.  A system is
\emph{nontrivial} if its underlying space has more than one point.  It is
\emph{transitive} if for all nonempty open $U,V\subseteq X$ there is
$n\in\N$ such that $U\cap T^{-n}V\neq\emptyset$, and it is
\emph{minimal} if every forward orbit $\{T^n x:n\in\Nzero\}$ is dense.
For a homeomorphism this is equivalent to density of every two-sided orbit.
A \emph{factor map}
\[
 \pi:(X,T)\longrightarrow(Y,S)
\]
is a continuous surjection satisfying $\pi T=S\pi$.  This identity holds for
every $n\in\Nzero$; since $T$ and $S$ are homeomorphisms, it in fact holds
for every $n\in\Z$.  A factor
map, and its target factor, are \emph{nontrivial} if the target space has more
than one point.  A \emph{conjugacy} is a homeomorphism intertwining the two
transformations.

For a factor map $q:X\to Y$, write
\[
 R_q=\{(x,x')\in X^2:q(x)=q(x')\}.
\]
Its pullback algebra is
\[
 q^{*}C(Y)=\{h\circ q:h\in C(Y)\}\subseteq C(X).
\]
We say that a factor $q_1:X\to Y_1$ is \emph{larger} than a factor
$q_2:X\to Y_2$ if $q_2$ factors through $q_1$.  Maximal and minimal
factors below are understood in this order.  Standard references include
\cite{AkinRecurrence,Auslander}.

For nonempty open sets $U,V\subseteq X$, put
\[
 N(U,V)=\{n\in\N:U\cap T^{-n}V\neq\emptyset\}.
\]
Thus $N(U,V)$ records the positive hitting times of the $\Nzero$-action.
The product of $(X,T)$ and $(Y,S)$ is
$(X\times Y,T\times S)$.  Two systems are \emph{weakly disjoint} if their
product is transitive.  A system is \emph{weakly mixing} if its self-product
is transitive, and it is
\emph{topologically mixing} if $N(U,V)$ is cofinite in $\N$, that is, if its
complement is finite, for every pair of nonempty open sets $U,V\subseteq X$.
A system is \emph{mildly mixing}
if its product with every transitive topological dynamical system is
transitive.

For continuous maps $F,G:X\to X$, write
\[
 D(F,G)=\sup_{x\in X}d(Fx,Gx).
\]
For $m,n\in\Nzero$, commutativity of the powers of $T$ and surjectivity of
$T^m$ give
\[
 D(T^{m+n},T^m)=D(T^n,\id_X).
\]
Consequently,
\begin{equation}
 D(T^{m+n},\id_X)
 \leq D(T^m,\id_X)+D(T^n,\id_X),
 \label{eq:uniform-subadditivity}
\end{equation}
and iteration gives the corresponding estimate for every finite sum of
nonnegative powers.
A strictly increasing sequence $(r_k)\subseteq\N$ is a
\emph{uniform rigidity sequence} for $(X,T)$ if
\[
 D(T^{r_k},\id_X)\longrightarrow0.
\]
The system is \emph{uniformly rigid} if it admits such a sequence.  This
notion is independent of the compatible metric on $X$.
A \emph{uniformly rigid factor} is a factor whose target system is uniformly
rigid.

\subsection{Basics of IP-systems}
\label{subsec:ip-systems}

Let $S_1,S_2,\ldots$ be commuting homeomorphisms of $X$.  For
$\alpha=\{i_1<\cdots<i_k\}\in\Fin$, define
\[
 T_\alpha=S_{i_1}\cdots S_{i_k}.
\]
Then
\[
 T_{\alpha\cup\beta}=T_\alpha T_\beta
 \quad\text{whenever }\alpha\cap\beta=\emptyset.
\]
The family $(T_\alpha)_{\alpha\in\Fin}$ is an \emph{IP-system}; see \cite{FurstenbergBook, FurstenbergIP}.  The main example in this paper is obtained from a
single homeomorphism $T$ and a sequence $(p_i)\subseteq\N$:
\[
 T_\alpha=T^{p_\alpha}.
\]
This is an IP-subfamily of the $\Nzero$-action generated by $T$.

Let $(x_\alpha)_{\alpha\in\Fin}$ be a net in $X$.  We write
\[
 \IPlim_{\alpha\in\Fin}x_\alpha=x
\]
if, for every neighborhood $O$ of $x$, there is $\eta\in\Fin$ such
that $x_\alpha\in O$ whenever $\alpha>\eta$.  We refer to this as the
\emph{full-tail IP-limit}.

An associated IP-system $T_\alpha=T^{p_\alpha}$ is
\emph{tail equicontinuous} if, for every $\varepsilon>0$, there are
$\delta>0$ and $\eta\in\Fin$ such that
\[
 d(x,y)<\delta
 \quad\Longrightarrow\quad
 d(T^{p_\beta}x,T^{p_\beta}y)<\varepsilon
 \quad(\beta>\eta).
\]

\subsection{Classical return-time criteria for mild mixing}
\label{subsec:classical-mild-mixing}

The following criterion is the $\SIP^{*}$ definition of Glasner and Weiss
together with their multiplier theorem
\cite[Definition~4.10 and Theorem~4.11]{GlasnerWeiss}.  Huang and Ye give
the equivalent $(\operatorname{IP}-\operatorname{IP})^{*}$ return-time
form in \cite[Theorem~6.6]{HuangYe}.  The positive-time multiplier
formulation and the required SIP-set realization are
\cite[Theorems~3.6--3.7]{AkinGlasnerSIP}.

\begin{lemma}[Glasner--Weiss; Huang--Ye; Akin--Glasner]
\label{lem:sip-multiplier}
A topological dynamical system $(X,T)$ is mildly mixing if and only if
\[
 N(U,V)\in\SIP^{*}
\]
for every pair of nonempty open sets $U,V\subseteq X$.
\end{lemma}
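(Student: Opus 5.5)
The plan is to assemble the lemma from the three cited sources, handling each direction separately and paying attention to normalization. Mild mixing in this paper means: for every transitive system $(Y,S)$ (positive-time transitivity), the product $(X\times Y,T\times S)$ is transitive. The criterion is that every $N(U,V)$ lies in $\SIP^{*}$, with SIP-sets normalized as in \eqref{eq:tail-sip}.

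\emph{Sufficiency.} Assume $N(U,V)\in\SIP^{*}$ for all nonempty open $U,V$, and let $(Y,S)$ be transitive. Fix nonempty open $U\times U'$ and $V\times V'$ in $X\times Y$. We must find $n$ with $n\in N(U,V)\cap N_S(U',V')$. The key input is that for a transitive system, $N_S(U',V')$ contains a translate-type SIP structure. I would first pick $m\in N_S(U',V')$ and set $W=U'\cap S^{-m}V'$, which is nonempty and open. Then $N_S(W,W)$ contains an SIP-set, by the Akin--Glasner SIP-realization \cite[Theorem~3.7]{AkinGlasnerSIP}. The caveat is that this is meaningful when return times of $W$ are nonempty; in a transitive system without isolated points, $N(W,W)$ is an IP-type set built inductively. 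Next, replace $U$ by $U\cap T^{-m}V$, or reduce differently so that the shift $m$ is absorbed. Precisely this bookkeeping is the content of the Glasner--Weiss multiplier theorem \cite[Theorem~4.11]{GlasnerWeiss}, in its positive-time form \cite[Theorem~3.6]{AkinGlasnerSIP}. I would cite these rather than redo them, checking only that the acting semigroup is $\Nzero$ and that hitting sets lie in $\N$.

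\emph{Necessity.} Suppose some $N(U,V)$ misses an SIP-set $\SIP(r_i:i>0)$. Realize this SIP-set via \cite[Theorem~3.7]{AkinGlasnerSIP}: there is a transitive system $(Y,S)$ and open $U',V'$ with $N_S(U',V')\subseteq \SIP(r_i:i>0)$. This is a symbolic system built from the indicator of $\operatorname{FS}_0(r_i)$, where transitivity comes from the symmetric difference structure. Then $N(U,V)\cap N_S(U',V')=\emptyset$, so the product is not transitive and $(X,T)$ is not mildly mixing.

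The main obstacle is consistency of conventions. The cited results differ: some use $\Z$-actions, some positive times, and some the $(\mathrm{IP}-\mathrm{IP})^{*}$ form of \cite[Theorem~6.6]{HuangYe}. I would verify that our $\SIP$ normalization, with $a-b>0$ and $0\in\operatorname{FS}_0$, matches \cite[Section~1]{AkinGlasnerSIP}, and that transitivity defined via $N(U,V)\neq\emptyset$ agrees with theirs. Having done so, the lemma is a direct citation.
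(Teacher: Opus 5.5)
Your plan matches the paper, which does not prove this lemma but assembles it from Glasner--Weiss (Theorem~4.11), Huang--Ye (Theorem~6.6) and Akin--Glasner (Theorems~3.6--3.7), and your necessity direction uses the realization theorem exactly where the paper says it is essential. In the sufficiency sketch, however, two points need correcting.

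First, the fact that $N_S(W,W)$ contains an SIP-set is not the realization theorem, which gives the reverse containment (a return-time set inside a prescribed SIP-set). It is the elementary construction $W_0=W$, $r_k\in N_S(W_{k-1},W_{k-1})$ with $r_k>r_{k-1}$ (such return sets are unbounded), $W_k=W_{k-1}\cap S^{-r_k}W_{k-1}$. This works in every positive-time transitive system, isolated points included, so your caveat about isolated points is unnecessary.

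Second, since $N_S(U',V')\supseteq m+N_S(W,W)$, the shift $m$ is absorbed by applying the $\SIP^{*}$ hypothesis to the pair $(U,T^{-m}V)$, not by shrinking $U$ to $U\cap T^{-m}V$.
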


The realization theorem is essential here: the criterion does not follow
merely from the fact that self-return sets in transitive systems are
SIP-sets.

The next criterion is precisely
\cite[Theorem~5.8(2)]{HuangYe}; see also
\cite[Theorem~7.8(2)]{GlasnerYeLocal}.
\begin{lemma}[Huang--Ye]
\label{lem:minimal-ipstar}
A minimal topological dynamical system $(X,T)$ is mildly mixing if and only
if
\[
 N(U,V)\in\operatorname{IP}^{*}
\]
for every pair of nonempty open sets $U,V\subseteq X$.
\end{lemma}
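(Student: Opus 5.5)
The implication from $\operatorname{IP}^{*}$-transitivity to mild mixing is formal. Since $\operatorname{IP}^{*}\subseteq\SIP^{*}$, the hypothesis $N(U,V)\in\operatorname{IP}^{*}$ for all nonempty open $U,V$ gives $N(U,V)\in\SIP^{*}$ for all such pairs. Lemma~\ref{lem:sip-multiplier} then says $(X,T)$ is mildly mixing. This direction does not use minimality.

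For the converse, assume $(X,T)$ is minimal and mildly mixing, and suppose for contradiction that some $N(U,V)$ is disjoint from an IP-set $F=\operatorname{FS}(p_i)$. I would argue with an idempotent ultrafilter.
\begin{enumerate}
\item By Hindman's theorem there is an idempotent $u\in\beta\N$ with $\operatorname{FS}(p_i:i>M)\in u$ for every $M$. Since $F\in u$ and $F\cap N(U,V)=\emptyset$, the complement $\N\setminus N(U,V)$ lies in $u$.
\item The point is to make $u$ act on a minimal subsystem, so that minimality can be exploited. I would let $u$ act on $X\times X$, which is again mildly mixing, hence minimal-free only up to transitivity. I would pass to a minimal left ideal $L$ of $\beta\N$ with $L\subseteq \beta\N+u$. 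Inside $u+L$ there is an idempotent $v$ with $v\le u$, i.e.\ $v=u+v=v+u$.
\item Such a $v$ is a minimal idempotent. It still contains $F$ up to translates, and is the natural object for minimal systems. Concretely, every $x\in X$ satisfies $v\text{-}\lim T^nx=x$, because minimal idempotents fix every point of a minimal system.
\item Taking $x\in U$, I would combine the $v$-recurrence of $x$ with minimality, which yields finitely many $k_1,\dots,k_m$ with $X=\bigcup_j T^{-k_j}V$. This is meant to produce a shift of $N(U,V)$ that is a member of $v$, and hence meets $F$ after a bounded translation.
\end{enumerate}

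The translation error would then be absorbed by mild mixing, in the form of Lemma~\ref{lem:sip-multiplier} applied to $X\times X$ and suitable product open sets. Along a rapidly increasing subsequence of $(p_i)$, the SIP-set it generates consists of differences $p_\alpha-p_\beta$ with $\beta>\alpha$ or $\beta=\emptyset$. Return times in $\SIP(p_i)$ for the product, together with the $v$-recurrence of $x$, should allow one to replace $p_\alpha-p_\beta$ by $p_\alpha$ itself. This would contradict $F\cap N(U,V)=\emptyset$.

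\textbf{Main obstacle.} I expect the hardest step to be converting SIP hitting times into genuine IP hitting times. Mild mixing controls only symmetric differences, and minimality must be used, through $v$-limits, to cancel the subtracted term $p_\beta$. A difficulty I have not resolved is that a minimal idempotent $v\le u$ need not contain $F$ itself: $v=u+v$ only gives $F-n\in v$ for $u$-many $n$. Making this cancellation rigorous is the core of Huang and Ye's argument in \cite[Theorem~5.8(2)]{HuangYe}, and I would follow their enveloping-semigroup approach there, for example by first proving $N(U,U)\in\operatorname{IP}^{*}$ and then transporting it to $N(U,V)$ using minimality.
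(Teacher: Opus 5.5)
\emph{Note: the paper gives no proof of this lemma; it cites \cite[Theorem~5.8(2)]{HuangYe}. So your proposal can only be assessed on its own merits.}

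Your first direction is correct. Every SIP-set is an IP-set, so $\operatorname{IP}^{*}\subseteq\SIP^{*}$, and Lemma~\ref{lem:sip-multiplier} then gives mild mixing without using minimality.

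Your sketch of the converse is not an argument, and several of its steps are wrong. The claim in step~3, that a minimal idempotent fixes every point of a minimal system, fails exactly in the case you need. Suppose a minimal idempotent acted as the identity on $X$. Then the identity of the enveloping semigroup $E(X)$ would lie in a minimal left ideal, which forces $E(X)$ to be a group, i.e.\ $X$ distal. But a nontrivial mildly mixing system is weakly mixing, and a distal weakly mixing minimal system is trivial: its square would be a transitive union of minimal sets, hence minimal, which the diagonal forbids. What is true is only pointwise. If $ux=x$, pick an idempotent $e$ in the minimal left ideal $L+u$ with $ex=x$; then $v=u+e$ is a minimal idempotent with $v\le u$ and $vx=x$. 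Passing to $X\times X$ does not help either, since $X\times X$ is not minimal.

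Even granting the pointwise version, the remaining steps do not close.
\begin{itemize}
\item As you note, $v\le u$ does not put $F$ into $v$, and this is not a technicality. Members of minimal idempotents are piecewise syndetic, while the IP-set $\operatorname{FS}(10^i)$ has zero upper Banach density, so no minimal idempotent contains it. Yet $\operatorname{IP}^{*}$ requires $N(U,V)$ to belong to every idempotent, including those containing such thin sets.
\item Step~4 gives at best $N(U,T^{-k}V)=N(U,V)-k\in v$ for some $k$ from the finite cover. Translation does not preserve $\operatorname{IP}^{*}$: $2\N\in\operatorname{IP}^{*}$ while $2\N-1\notin\operatorname{IP}^{*}$. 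Even a common point of this translate with $F$ yields $p_\alpha+k\in N(U,V)$, not $p_\alpha\in N(U,V)$.
\item The proposed absorption of $k$ through SIP-times in $X\times X$ is asserted without a mechanism.
\item Your description of the SIP-set is reversed. For rapidly increasing $(p_i)$, the positive differences $p_\alpha-p_\beta$ with $\alpha\cap\beta=\emptyset$ are those with $\max\alpha>\max\beta$. The case $\beta>\alpha$ gives negative numbers.
\end{itemize}

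The step that carries all the content is therefore absent. That step is to use minimality to turn the IP-hole $F\cap N(U,V)=\emptyset$, for an arbitrary idempotent $u$ containing $F$, into a violation of the $\SIP^{*}$ criterion or of the multiplier property for some pair of open sets. Your converse, like the paper's treatment, rests entirely on the appeal to Huang--Ye.
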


The following result is \cite[Lemma~4.14 and
Corollary~4.15]{GlasnerWeiss}.
\begin{lemma}[Glasner--Weiss]
\label{lem:mild-no-uniform-factor}
A nontrivial mildly mixing system is not uniformly rigid.  More generally,
a mildly mixing system has no nontrivial uniformly rigid factor.
\end{lemma}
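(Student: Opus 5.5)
The statement to prove is Lemma~\ref{lem:mild-no-uniform-factor}: a mildly mixing system has no nontrivial uniformly rigid factor. The plan is to reduce to the case of a uniformly rigid target via two facts: mild mixing passes to factors, and a nontrivial uniformly rigid system fails the criterion of Lemma~\ref{lem:sip-multiplier}.

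\emph{Step 1: passing to factors.} Let $\pi:(X,T)\to(Y,S)$ be a factor map. For nonempty open $U',V'\subseteq Y$, the sets $\pi^{-1}U'$ and $\pi^{-1}V'$ are nonempty open subsets of $X$, and
\[
 N_X(\pi^{-1}U',\pi^{-1}V')\subseteq N_Y(U',V').
\]
Indeed, if $x\in\pi^{-1}U'\cap T^{-n}\pi^{-1}V'$, then $\pi x\in U'\cap S^{-n}V'$. Since $\SIP^{*}$ is upward hereditary, Lemma~\ref{lem:sip-multiplier} shows that $(Y,S)$ is mildly mixing. It therefore suffices to prove that a nontrivial uniformly rigid system $(Y,S)$ is not mildly mixing.

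\emph{Step 2: choosing open sets.} Fix a uniform rigidity sequence $(r_k)$ with $D(S^{r_k},\id_Y)\to0$. Choose distinct points $y\neq y'$ and set $\delta=d(y,y')/3$. Let $U=B(y,\delta)$ and $V=B(y',\delta)$. Then $\dist(U,V)\geq\delta$.

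\emph{Step 3: a sparse rigidity sequence.} Pass to a subsequence, still strictly increasing, such that
\[
 D(S^{r_i},\id_Y)<\delta\,2^{-i-1}\quad\text{for every }i.
\]
Let $n\in\SIP(r_i:i>0)$, so that $n=a-b$ with $a=r_\alpha$ and $b\in\operatorname{FS}_0$. Two cases arise.
\begin{itemize}
\item If $b=0$, then \eqref{eq:uniform-subadditivity} gives $D(S^{n},\id)\leq\sum_i\delta 2^{-i-1}<\delta/2$.
\item If $b=r_\beta$ with $b>0$, then $D(S^{a},\id)<\delta/2$ and $D(S^{b},\id)<\delta/2$. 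Since $S^{b}$ is surjective, the identity $D(S^{a-b},\id)=D(S^{a},S^{b})$ holds, and the triangle inequality gives $D(S^{n},\id)\leq D(S^a,\id)+D(S^b,\id)<\delta$.
\end{itemize}
The identity $D(S^{a-b},\id)=D(S^a,S^b)$ is the analogue of the displayed identity preceding \eqref{eq:uniform-subadditivity}, obtained by substituting $x=S^{b}z$.

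\emph{Step 4: conclusion.} For every $n\in\SIP(r_i:i>0)$ we have $d(S^n z,z)<\delta\leq\dist(U,V)$ for all $z\in U$, so $S^nU\cap V=\emptyset$. Hence $N(U,V)$ is disjoint from an SIP-set, so $N(U,V)\notin\SIP^{*}$. By Lemma~\ref{lem:sip-multiplier}, $(Y,S)$ is not mildly mixing, which contradicts Step 1. Taking $\pi=\id$ also gives the first sentence of the lemma. The only delicate point is the estimate for differences in Step 3. It is handled by the geometric-decay subsequence and the surjectivity identity, so I expect no real obstacle.
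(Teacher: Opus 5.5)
Your proof is correct and has the same structure the paper indicates. You first pass mild mixing to factors, via the inclusion $N_X(\pi^{-1}U',\pi^{-1}V')\subseteq N_Y(U',V')$ and Lemma~\ref{lem:sip-multiplier}, and then show that a nontrivial uniformly rigid system is not mildly mixing; the paper only cites Glasner--Weiss for that second step, and your geometric-decay subsequence, which gives an SIP-hole between two separated balls, is a valid self-contained argument using the same subadditivity device as Corollary~\ref{cor:exact-core-form}.
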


The factor statement also follows from the first assertion because mild
mixing passes to factors.

\section{Rigid functions and uniform rigidity}
\label{sec:theorem-a}

This section develops the global theory of rigid functions.  We first recall
the factor--algebra correspondence, then construct the maximal factor that is
uniformly rigid along a prescribed sequence, describe the factor generated by
a single rigid observable, and relate uniform rigidity to equicontinuity
along IP-sets.

\subsection{Invariant algebras and their factors}
\label{subsec:invariant-algebras}

Let $U_T:C(X)\to C(X)$ be the Koopman isometry $U_Tf=f\circ T$.  The
following general lemma will be used both for the global rigidity algebra and
for the local rigidity algebra in Section~\ref{sec:factors}.

\begin{lemma}%[Factor determined by an invariant algebra]
\label{lem:factor-algebra}
Let $(X,T)$ be a topological dynamical system, and let
$\mathcal A\subseteq C(X)$ be a closed unital algebra satisfying
\[
 U_T\mathcal A=\mathcal A.
\]
Define
\[
 x\sim_{\mathcal A}y
 \quad\Longleftrightarrow\quad
 f(x)=f(y)\ \text{for every }f\in\mathcal A.
\]
Then there are a topological dynamical system
$(Y_{\mathcal A},S_{\mathcal A})$ and a factor map
\[
 q_{\mathcal A}:(X,T)\longrightarrow(Y_{\mathcal A},S_{\mathcal A})
\]
such that
\[
 q_{\mathcal A}^{*}C(Y_{\mathcal A})=\mathcal A
 \quad\text{and}\quad
 R_{q_{\mathcal A}}=\sim_{\mathcal A}.
\]
Moreover, if $q:(X,T)\to(Z,R)$ is a factor satisfying
$q^{*}C(Z)\subseteq\mathcal A$, there is a unique factor map
$\theta:(Y_{\mathcal A},S_{\mathcal A})\to(Z,R)$ such that
\[
 q=\theta\circ q_{\mathcal A}.
\]
\end{lemma}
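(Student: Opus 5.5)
The plan is to use Gelfand duality for real commutative C$^*$-algebras, in its concrete form as a quotient of $X$. Let $Y_{\mathcal A}=X/\!\sim_{\mathcal A}$ with the quotient map $q_{\mathcal A}$. To see that $Y_{\mathcal A}$ is compact metrizable, note that $C(X)$ is separable, so $\mathcal A$ is separable. Choose a countable dense family $\{f_n\}\subseteq\mathcal A$ with $\|f_n\|_\infty\le1$, and define $\Phi:X\to[-1,1]^{\N}$ by $\Phi(x)=(f_n(x))_n$. Then $\Phi(x)=\Phi(y)$ exactly when $x\sim_{\mathcal A}y$, by density and continuity. So we may take $Y_{\mathcal A}=\Phi(X)$, which is compact metric, with $q_{\mathcal A}=\Phi$. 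This gives $R_{q_{\mathcal A}}=\sim_{\mathcal A}$ at once.

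Next we define the dynamics. Since $U_T\mathcal A=\mathcal A$, both $f\circ T$ and $f\circ T^{-1}$ lie in $\mathcal A$ for $f\in\mathcal A$. Hence $x\sim_{\mathcal A}y$ holds if and only if $Tx\sim_{\mathcal A}Ty$. So $S_{\mathcal A}(q_{\mathcal A}x)=q_{\mathcal A}(Tx)$ is a well-defined bijection of $Y_{\mathcal A}$. It is continuous because $q_{\mathcal A}$ is a closed surjection from a compact space: $S_{\mathcal A}\circ q_{\mathcal A}=q_{\mathcal A}\circ T$ is continuous, and $q_{\mathcal A}$ is a quotient map. The same argument applied to $T^{-1}$ shows $S_{\mathcal A}$ is a homeomorphism. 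Therefore $q_{\mathcal A}$ is a factor map.

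The key step is the identity $q_{\mathcal A}^{*}C(Y_{\mathcal A})=\mathcal A$. The inclusion $\supseteq$ holds because each $f\in\mathcal A$ is constant on $\sim_{\mathcal A}$-classes. It therefore descends to a function $\bar f$ on $Y_{\mathcal A}$, which is continuous by the quotient property. For $\subseteq$, consider $\bar{\mathcal A}=\{\bar f:f\in\mathcal A\}$. This is a unital subalgebra of $C(Y_{\mathcal A})$ that separates points by the definition of $\sim_{\mathcal A}$. It is closed because $\|\bar f\|_\infty=\|f\|_\infty$ and $\mathcal A$ is closed. The real Stone--Weierstrass theorem then gives $\bar{\mathcal A}=C(Y_{\mathcal A})$. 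Pulling back yields $q_{\mathcal A}^{*}C(Y_{\mathcal A})\subseteq\mathcal A$. This is the only step with real content, and it relies on unitality and closedness of $\mathcal A$.

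For the universal property, let $q:(X,T)\to(Z,R)$ be a factor with $q^{*}C(Z)\subseteq\mathcal A$. If $x\sim_{\mathcal A}y$, then $h(q(x))=h(q(y))$ for every $h\in C(Z)$. Since $C(Z)$ separates the points of the compact metric space $Z$, this gives $q(x)=q(y)$. Hence $\theta(q_{\mathcal A}x)=q(x)$ is well defined, and it is continuous by the quotient property. Equivariance follows from $\theta S_{\mathcal A}q_{\mathcal A}=\theta q_{\mathcal A}T=qT=Rq=R\theta q_{\mathcal A}$ together with surjectivity of $q_{\mathcal A}$. Surjectivity of $\theta$ follows from that of $q$. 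Uniqueness follows because $q_{\mathcal A}$ is onto.
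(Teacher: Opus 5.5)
Your proof is correct and follows the paper's argument essentially step for step: the coordinate embedding $x\mapsto(f_n(x))_{n}$ by a sequence dense in the closed unit ball of $\mathcal A$ (which is how your normalized dense family should be read), the induced homeomorphism $S_{\mathcal A}$, the real Stone--Weierstrass theorem for $q_{\mathcal A}^{*}C(Y_{\mathcal A})=\mathcal A$, and descent through the quotient for the universal property. If anything, you are more explicit than the paper about why $S_{\mathcal A}^{\pm1}$ are continuous (via the quotient property of the closed surjection $q_{\mathcal A}$) and why the descended algebra is closed in $C(Y_{\mathcal A})$.
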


\begin{proof}
Since $C(X)$ is separable, so is the closed subspace $\mathcal A$.
Choose a sequence $(f_m)_{m\geq1}$ dense in the closed unit ball of
$\mathcal A$, and define
\[
 q_{\mathcal A}(x)=(f_m(x))_{m\geq1}
 \in[-1,1]^{\N}.
\]
Equip the product with
\[
 \rho(a,b)=\sum_{m=1}^{\infty}2^{-m}|a_m-b_m|.
\]
A finite-coordinate argument shows that $q_{\mathcal A}$ is continuous.
Its image $Y_{\mathcal A}$ is compact and metrizable.  Density of
$(f_m)$ gives
\[
 q_{\mathcal A}(x)=q_{\mathcal A}(y)
 \quad\Longleftrightarrow\quad
 x\sim_{\mathcal A}y.
\]

The equality $U_T\mathcal A=\mathcal A$ implies invariance under both
$U_T$ and $U_T^{-1}$.  Hence
\[
 S_{\mathcal A}q_{\mathcal A}(x)=q_{\mathcal A}(Tx)
\]
defines a bijection of $Y_{\mathcal A}$, and
\[
 S_{\mathcal A}^{-1}q_{\mathcal A}(x)=q_{\mathcal A}(T^{-1}x)
\]
defines its continuous inverse.  Thus $S_{\mathcal A}$ is a
homeomorphism and $q_{\mathcal A}$ is a factor map.

Every member of $\mathcal A$ is constant on the fibers of
$q_{\mathcal A}$ and therefore factors continuously through the quotient.
Conversely, the coordinate functions on $Y_{\mathcal A}$ separate points.
The real Stone--Weierstrass theorem and closedness of $\mathcal A$ give
\[
 q_{\mathcal A}^{*}C(Y_{\mathcal A})=\mathcal A.
\]

Finally, if $q^{*}C(Z)\subseteq\mathcal A$ and
$q_{\mathcal A}(x)=q_{\mathcal A}(y)$, then
$h(qx)=h(qy)$ for every $h\in C(Z)$, whence $qx=qy$.
Thus $q$ is constant on the fibers of $q_{\mathcal A}$.  The quotient
property gives a unique continuous surjection $\theta$ satisfying
$q=\theta q_{\mathcal A}$, and the intertwining identity follows from
surjectivity of $q_{\mathcal A}$.
\end{proof}

\subsection{The rigidity algebra and proof of Theorem~\ref{thmA:intro}}
\label{subsec:rigidity-algebra}

\begin{definition}
\label{def:rigid-function}
Let $\mathbf r=(r_k)_{k\geq1}\subseteq\N$ be strictly increasing.  A
function $f\in C(X)$ is \emph{$\mathbf r$-rigid} if
\[
 \|f\circ T^{r_k}-f\|_\infty\longrightarrow0.
\]
It is \emph{rigid} if it is $\mathbf r$-rigid for some such sequence.
The corresponding \emph{rigidity algebra} is
\[
 \mathcal A_{\mathbf r}(X,T)
 =\{f\in C(X):
   \|f\circ T^{r_k}-f\|_\infty\to0\}.
\]
A family of functions is \emph{jointly rigid} if all its members are rigid
along one common sequence.
\end{definition}

\begin{theorem}%[Maximal factor determined by a rigidity sequence]
\label{thm:maximal-rigid-factor}
Let $(X,T)$ be a topological dynamical system and let
$\mathbf r=(r_k)\subseteq\N$ be strictly increasing.  Then
$\mathcal A_{\mathbf r}(X,T)$ is a closed unital
$T^{\pm1}$-invariant subalgebra of $C(X)$.  It determines a factor
\[
 \pi_{\mathbf r}:(X,T)\longrightarrow
 (Y_{\mathbf r},S_{\mathbf r})
\]
such that
\[
 \pi_{\mathbf r}^{*}C(Y_{\mathbf r})
 =\mathcal A_{\mathbf r}(X,T).
\]
The sequence $\mathbf r$ is a uniform rigidity sequence for
$(Y_{\mathbf r},S_{\mathbf r})$, and this factor is maximal among factors
uniformly rigid along $\mathbf r$.

Moreover, the following are equivalent:
\begin{enumerate}[label=\textup{(\roman*)}]
\item $\mathbf r$ is a uniform rigidity sequence for $(X,T)$;
\item $\mathcal A_{\mathbf r}(X,T)=C(X)$;
\item $\mathcal A_{\mathbf r}(X,T)$ separates the points of $X$;
\item there is a countable point-separating family in $C(X)$ jointly rigid
      along $\mathbf r$.
\end{enumerate}
\end{theorem}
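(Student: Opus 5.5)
First I will verify the algebraic properties of $\mathcal A_{\mathbf r}=\mathcal A_{\mathbf r}(X,T)$ directly from the definition. Linearity and $\mathbf1\in\mathcal A_{\mathbf r}$ are immediate. Closure under products follows from
\[
 \|fg\circ T^{r_k}-fg\|_\infty\le\|f\|_\infty\|g\circ T^{r_k}-g\|_\infty+\|g\|_\infty\|f\circ T^{r_k}-f\|_\infty .
\]
Norm-closedness is an $\varepsilon/3$ argument, because $U_T^{r_k}$ is an isometry. For $T^{\pm1}$-invariance, note that $\|f\circ T^{j}\circ T^{r_k}-f\circ T^j\|_\infty=\|(f\circ T^{r_k}-f)\circ T^j\|_\infty=\|f\circ T^{r_k}-f\|_\infty$, since $T^j$ is a bijection and powers of $T$ commute. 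Hence $U_T\mathcal A_{\mathbf r}=\mathcal A_{\mathbf r}$, and Lemma~\ref{lem:factor-algebra} produces the factor $\pi_{\mathbf r}:(X,T)\to(Y_{\mathbf r},S_{\mathbf r})$ with $\pi_{\mathbf r}^*C(Y_{\mathbf r})=\mathcal A_{\mathbf r}$.

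Next I will prove the key equivalence (i)$\Leftrightarrow$(iv) for an arbitrary system, and then apply it to $Y_{\mathbf r}$. For (i)$\Rightarrow$(ii), every $f\in C(X)$ is uniformly continuous, so $D(T^{r_k},\id_X)\to0$ forces $\|f\circ T^{r_k}-f\|_\infty\to0$. The implications (ii)$\Rightarrow$(iii) are trivial. For (iii)$\Rightarrow$(iv), I use the separability of the closed algebra $\mathcal A_{\mathbf r}$: a dense sequence in it still separates points, because if some $f\in\mathcal A_{\mathbf r}$ separates $x,y$, then a nearby member of the sequence does too.

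The main step is (iv)$\Rightarrow$(i). Given a point-separating family $\mathcal G=\{g_m\}$ jointly rigid along $\mathbf r$, I rescale so that $\|g_m\|_\infty\le1$. I then define
\[
 \rho(x,y)=\sum_m2^{-m}|g_m(x)-g_m(y)| .
\]
This is a continuous metric on $X$, since it separates points. Because $X$ is compact, the identity map $(X,d)\to(X,\rho)$ is a homeomorphism, so $\rho$ is a compatible metric. Moreover,
\[
 \sup_x\rho(T^{r_k}x,x)\le\sum_m2^{-m}\|g_m\circ T^{r_k}-g_m\|_\infty\to0
\]
by dominated convergence of the tail: split the sum at $M$ with tail $\le 2^{1-M}$. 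Since uniform rigidity is independent of the compatible metric (as noted in the Preliminaries), this gives (i). This also proves the first assertion of Theorem~\ref{thmA:intro}.

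Finally, I will show that $\mathbf r$ is a rigidity sequence for $Y_{\mathbf r}$ and that this factor is maximal. The coordinate functions $h_m$ on $Y_{\mathbf r}$, namely the $f_m$ from the proof of Lemma~\ref{lem:factor-algebra}, satisfy $h_m\circ\pi_{\mathbf r}=f_m\in\mathcal A_{\mathbf r}$. Using surjectivity of $\pi_{\mathbf r}$ and $\pi_{\mathbf r}T=S_{\mathbf r}\pi_{\mathbf r}$, I get $\|h_m\circ S_{\mathbf r}^{r_k}-h_m\|_{\infty}=\|f_m\circ T^{r_k}-f_m\|_\infty\to0$. The $h_m$ separate points of $Y_{\mathbf r}$, so (iv)$\Rightarrow$(i) applied to $Y_{\mathbf r}$ shows that $\mathbf r$ is a uniform rigidity sequence there.

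For maximality, take any factor $q:(X,T)\to(Z,R)$ uniformly rigid along $\mathbf r$. By (i)$\Rightarrow$(ii) for $Z$, every $h\in C(Z)$ satisfies $\|h\circ R^{r_k}-h\|\to0$, so $h\circ q\in\mathcal A_{\mathbf r}$, again by surjectivity of $q$. Thus $q^*C(Z)\subseteq\mathcal A_{\mathbf r}$, and the universal property in Lemma~\ref{lem:factor-algebra} shows that $q$ factors through $\pi_{\mathbf r}$.

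I expect no real obstacle here. The only delicate point is the compatibility of the metric $\rho$, which compactness settles.
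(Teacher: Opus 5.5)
Your proposal is correct and uses essentially the same argument as the paper: the same product and $\varepsilon/3$ estimates, the same translation identity for $T^{\pm1}$-invariance, Lemma~\ref{lem:factor-algebra} for the factor and its universal property, and the same weighted-sum metric estimate $\sup_x\rho(T^{r_k}x,x)\le\sum_m2^{-m}\|g_m\circ T^{r_k}-g_m\|_\infty$. The only difference is the order: the paper first proves rigidity of $Y_{\mathbf r}$ via the coordinate metric and then obtains (iv)$\Rightarrow$(i) by noting that $\pi_{\mathbf r}$ becomes injective, hence a conjugacy, whereas you prove (iv)$\Rightarrow$(i) directly for an arbitrary system (using compactness to see that $\rho$ is compatible) and then apply it to the coordinate functions on $Y_{\mathbf r}$.
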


\begin{proof}
Constants belong to the rigidity algebra, and it is plainly closed under
addition and scalar multiplication.  If $f,g$ belong to it, then
\[
 \|(fg)\circ T^{r_k}-fg\|_\infty
 \leq
 \|f\|_\infty\|g\circ T^{r_k}-g\|_\infty+\|g\|_\infty\|f\circ T^{r_k}-f\|_\infty,
\]
so it is an algebra.  If $f_n\to f$ uniformly and every $f_n$ is
$\mathbf r$-rigid, then
\[
 \|f\circ T^{r_k}-f\|_\infty
 \leq2\|f-f_n\|_\infty+
 \|f_n\circ T^{r_k}-f_n\|_\infty,
\]
which proves closedness.

For every $j\in\Z$, surjectivity of $T^j$ gives the exact identity
\begin{equation}
 \|(f\circ T^j)\circ T^{r_k}-f\circ T^j\|_\infty
 =\|f\circ T^{r_k}-f\|_\infty.
 \label{eq:translate-rigidity}
\end{equation}
Thus $U_T\mathcal A_{\mathbf r}=\mathcal A_{\mathbf r}$.  Apply
Lemma~\ref{lem:factor-algebra} to obtain the factor
$\pi_{\mathbf r}:X\to Y_{\mathbf r}$.  Choose $(f_m)$ dense in the closed unit ball of
$\mathcal A_{\mathbf r}$, identify the factor with the coordinate
realization furnished by Lemma~\ref{lem:factor-algebra}, and equip it with
\[
 \rho(y,z)=\sum_{m=1}^{\infty}2^{-m}|y_m-z_m|.
\]
Then
\[
\begin{split}
 \sup_{y\in Y_{\mathbf r}}
 \rho(S_{\mathbf r}^{r_k}y,y)
 &\leq
 \sum_{m=1}^{\infty}2^{-m}
 \|f_m\circ T^{r_k}-f_m\|_\infty
 \longrightarrow0.
\end{split}
\]
Indeed, first control finitely many coordinates and then use the summable
tail.  Hence $\mathbf r$ is a uniform rigidity sequence for the factor.

Let $q:(X,T)\to(Z,R)$ be a factor for which
$R^{r_k}\to\id_Z$ uniformly.  For every $h\in C(Z)$, uniform continuity
of $h$ yields
\[
 \|h\circ q\circ T^{r_k}-h\circ q\|_\infty
 =\|h\circ R^{r_k}\circ q-h\circ q\|_\infty
 \longrightarrow0.
\]
Thus $q^{*}C(Z)\subseteq\mathcal A_{\mathbf r}$, and
Lemma~\ref{lem:factor-algebra} makes $q$ factor uniquely through
$\pi_{\mathbf r}$.  This proves maximality.

If $T^{r_k}\to\id_X$ uniformly, uniform continuity makes every
$f\in C(X)$ $\mathbf r$-rigid, proving
\textup{(i)}$\Rightarrow$\textup{(ii)}.  The implication
\textup{(ii)}$\Rightarrow$\textup{(iii)} is immediate.  For
\textup{(iii)}$\Rightarrow$\textup{(iv)}, choose a countable dense subset
of the closed unit ball of $\mathcal A_{\mathbf r}$.  It still separates
points: if $f(x)\neq f(y)$, an approximant within
$\lvert f(x)-f(y)\rvert/3$ of $f$ separates $x$ and $y$.
Under \textup{(iv)},
$\pi_{\mathbf r}$ is injective, hence a conjugacy.  Uniform rigidity of
the factor therefore proves \textup{(i)}. The proof is completed.
\end{proof}

Theorem~\ref{thm:maximal-rigid-factor} proves
Theorem~\ref{thmA:intro}.

\subsection{Factors generated by one rigid observable}
\label{subsec:single-rigid-observable}

The homeomorphism hypothesis requires bilateral orbit names.  For
$f\in C(X,[0,1])$, define
\[
 \Phi_f(x)=\bigl(f(T^j x)\bigr)_{j\in\Z}
 \in[0,1]^{\Z},
 \qquad
 X_f=\Phi_f(X),
\]
and let $\sigma$ be the bilateral shift, with
$(\sigma y)_j=y_{j+1}$.  Equip the full product with
\begin{equation}
 \rho_{\pm}(a,b)
 =\frac13\sum_{j\in\Z}2^{-|j|}|a_j-b_j|.
 \label{eq:bilateral-metric}
\end{equation}
Any nonconstant real-valued continuous function can be affinely normalized
to have range in $[0,1]$, without changing any of the rigidity
convergences below.

\begin{proposition}%[Bilateral orbit-name factor]
\label{prop:bilateral-rigid-factor}
Let $f\in C(X,[0,1])$ and let
$\mathbf r=(r_k)\subseteq\N$ be strictly increasing.
\begin{enumerate}[label=\textup{(\roman*)}]
\item $\Phi_f:(X,T)\to(X_f,\sigma)$ is a factor map in the
      category of homeomorphisms.
\item $f$ is $\mathbf r$-rigid if and only if $\mathbf r$ is a
      uniform rigidity sequence for $(X_f,\sigma)$.
\item The factor $X_f$ is the smallest factor through which $f$
      factors.
\item The system $(X,T)$ has a nonconstant rigid continuous function if
      and only if it has a nontrivial uniformly rigid factor.
\end{enumerate}
\end{proposition}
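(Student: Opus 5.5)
For (i), I will first check continuity of $\Phi_f$ coordinatewise: each coordinate $x\mapsto f(T^jx)$ is continuous, and $\rho_\pm$ induces the product topology, so $\Phi_f$ is continuous. The identity $\Phi_f(Tx)=\sigma\Phi_f(x)$ is immediate from $(\sigma y)_j=y_{j+1}$. Hence $X_f$ is compact and invariant under $\sigma$ and $\sigma^{-1}$, because $\sigma^{-1}\Phi_f(x)=\Phi_f(T^{-1}x)$. So $\sigma|_{X_f}$ is a homeomorphism and $\Phi_f$ is a factor map.

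For (ii), the forward direction follows the estimate in the proof of Theorem~\ref{thm:maximal-rigid-factor}. Equation \eqref{eq:translate-rigidity} gives $\|f\circ T^{j+r_k}-f\circ T^j\|_\infty=\|f\circ T^{r_k}-f\|_\infty$ for every $j\in\Z$. Therefore
\[
 \sup_{x}\rho_\pm(\sigma^{r_k}\Phi_f x,\Phi_f x)\leq\tfrac13\Bigl(\sum_{j\in\Z}2^{-|j|}\Bigr)\|f\circ T^{r_k}-f\|_\infty=\|f\circ T^{r_k}-f\|_\infty,
\]
which tends to $0$. (The factor $\frac13$ normalizes $\sum 2^{-|j|}=3$.) For the converse, the zeroth coordinate $\pi_0(y)=y_0$ is continuous on $X_f$ and $f=\pi_0\circ\Phi_f$. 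Uniform rigidity of $X_f$ along $\mathbf r$ then gives
\[
 \|f\circ T^{r_k}-f\|_\infty=\|\pi_0\circ\sigma^{r_k}-\pi_0\|_{X_f}\to0 .
\]
One could also cite the maximality argument of Theorem~\ref{thm:maximal-rigid-factor}. Explicitly, $|y_0-z_0|\leq 3\rho_\pm(y,z)$ already suffices.

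For (iii), I will take "smallest" in the ordering of the paper. If $q:(X,T)\to(Z,R)$ is a factor and $f=h\circ q$ with $h\in C(Z)$, define $\theta(z)=(h(R^jz))_{j\in\Z}$. This map is continuous and satisfies $\theta\circ q=\Phi_f$ because $qT^j=R^jq$ for all $j\in\Z$. Hence $\theta$ maps $Z$ onto $X_f$ and intertwines $R$ with $\sigma$, so $\Phi_f$ factors through $q$. Conversely, $f$ factors through $\Phi_f$ via $\pi_0$. Equivalently, via Lemma~\ref{lem:factor-algebra}, $\Phi_f^{*}C(X_f)$ is the closed unital $T^{\pm1}$-invariant algebra generated by $f$, since the coordinate functions generate $C(X_f)$ by Stone--Weierstrass. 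This is the smallest invariant algebra containing $f$.

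For (iv), suppose $f$ is nonconstant and rigid. After affine normalization into $[0,1]$, which preserves rigidity, (ii) shows that $X_f$ is uniformly rigid. It is nontrivial because the distinct values of $f$ give distinct zeroth coordinates. Conversely, given a nontrivial uniformly rigid factor $q:X\to Z$ with sequence $\mathbf r$, choose a nonconstant $h\in C(Z)$, which exists by Urysohn. Then $h\circ q$ is $\mathbf r$-rigid, as in the maximality step of Theorem~\ref{thm:maximal-rigid-factor}, and it is nonconstant since $q$ is surjective. I expect no real obstacle. The only points needing care are the two-sided bookkeeping in (i), namely that $X_f$ is $\sigma^{-1}$-invariant, and making "smallest factor" precise in (iii).
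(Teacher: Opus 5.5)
Your proposal is correct and follows the paper's proof essentially step for step. It uses the same weighted-sum estimate (weights summing to one, zeroth coordinate of weight $1/3$) for (ii), the same map $z\mapsto(h(R^jz))_{j\in\Z}$ for (iii), and the same two directions for (iv). Your extra remarks, namely the explicit affine normalization in (iv) and the description of $\Phi_f^{*}C(X_f)$ via Lemma~\ref{lem:factor-algebra}, are correct but not needed.
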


\begin{proof}
The map $\Phi_f$ is continuous and satisfies
\[
 \Phi_fT=\sigma\Phi_f.
\]
Its compact image is invariant under both $\sigma$ and $\sigma^{-1}$,
because $T$ is a homeomorphism.  This proves \textup{(i)}.

Put
\[
 \varepsilon_k=\|f\circ T^{r_k}-f\|_\infty.
\]
For every $j\in\Z$, equation~\eqref{eq:translate-rigidity} gives
\[
 \|f\circ T^{j+r_k}-f\circ T^j\|_\infty=\varepsilon_k.
\]
Since the weights in \eqref{eq:bilateral-metric} sum to one,
\[
 \sup_{y\in X_f}\rho_{\pm}(\sigma^{r_k}y,y)
 \leq\varepsilon_k.
\]
Conversely, the zeroth coordinate has weight $1/3$, so
\[
 |f(T^{r_k}x)-f(x)|
 \leq3\rho_{\pm}
 \bigl(\sigma^{r_k}\Phi_f(x),\Phi_f(x)\bigr).
\]
This proves \textup{(ii)}.

If $q:(X,T)\to(Z,R)$ is a factor and $f=h\circ q$, define
\[
 \Psi(z)=\bigl(h(R^jz)\bigr)_{j\in\Z}.
\]
Then $\Psi$ is a factor map from $Z$ onto $X_f$ and
$\Phi_f=\Psi q$, which proves \textup{(iii)}.

If $f$ is nonconstant and rigid, \textup{(ii)} gives a nontrivial uniformly
rigid factor.  Conversely, if $q:X\to Z$ is a nontrivial uniformly rigid
factor, choose a nonconstant $h\in C(Z,[0,1])$.  Then $h\circ q$ is a
nonconstant rigid function.  This proves \textup{(iv)}.
\end{proof}

A function $f$ is \emph{topologically generating} if
$\Phi_f$ is injective, equivalently if
$\{f\circ T^j:j\in\Z\}$ separates points.

\begin{corollary}
\label{cor:generating-rigid}
If $f$ is topologically generating, then $f$ is
$\mathbf r$-rigid if and only if $\mathbf r$ is a uniform rigidity
sequence for $(X,T)$.
\end{corollary}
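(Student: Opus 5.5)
The plan is to deduce Corollary~\ref{cor:generating-rigid} directly from Proposition~\ref{prop:bilateral-rigid-factor}, using that a generating observable makes the orbit-name factor a conjugacy. First I normalize: an affine change $f\mapsto af+b$ with $a\neq0$ chosen so that the range lies in $[0,1]$ changes neither $\mathbf r$-rigidity (the quantity $\|f\circ T^{r_k}-f\|_\infty$ is only multiplied by $|a|$) nor the property of being topologically generating (the family $\{f\circ T^j\}$ still separates points). If $f$ is constant and generating, then $X$ is a single point, and both sides of the equivalence hold trivially. So I may assume $f\in C(X,[0,1])$.

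Since $f$ is generating, $\Phi_f$ is injective. By Proposition~\ref{prop:bilateral-rigid-factor}\textup{(i)}, it is a continuous bijection from the compact space $X$ onto the Hausdorff space $X_f$ and intertwines $T$ with $\sigma$. Hence $\Phi_f$ is a homeomorphism, and so a conjugacy between $(X,T)$ and $(X_f,\sigma)$.

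Uniform rigidity along a fixed sequence is invariant under conjugacy. If $\Phi$ is a conjugacy and $D(\sigma^{r_k},\id)\to0$ on $X_f$, then uniform continuity of $\Phi^{-1}$ on the compact space $X_f$ gives
\[
 \sup_{x}d(T^{r_k}x,x)=\sup_{y}d\bigl(\Phi^{-1}\sigma^{r_k}y,\Phi^{-1}y\bigr)\longrightarrow0,
\]
and the converse follows symmetrically. Combining this with Proposition~\ref{prop:bilateral-rigid-factor}\textup{(ii)}: $f$ is $\mathbf r$-rigid if and only if $\mathbf r$ is a uniform rigidity sequence for $(X_f,\sigma)$, which holds if and only if it is one for $(X,T)$.

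As a cross-check, Theorem~\ref{thm:maximal-rigid-factor} gives an alternative route. If $f$ is $\mathbf r$-rigid, then \eqref{eq:translate-rigidity} shows that every $f\circ T^j$ is $\mathbf r$-rigid. This family is countable and point-separating, so condition \textup{(iv)} of that theorem yields \textup{(i)}. The reverse implication is \textup{(i)}$\Rightarrow$\textup{(ii)} there. No step presents a real obstacle; the only point needing care is the conjugacy invariance, which rests on uniform continuity of the inverse map. The second route avoids even that.
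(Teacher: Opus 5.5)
Your proof is correct and follows the paper's route: being generating makes $\Phi_f$ a conjugacy onto $(X_f,\sigma)$, and Proposition~\ref{prop:bilateral-rigid-factor}\textup{(ii)} then transfers $\mathbf r$-rigidity. Your normalization step and the conjugacy-invariance argument spell out details the paper leaves implicit. Your cross-check through condition \textup{(iv)} of Theorem~\ref{thm:maximal-rigid-factor}, applied to the countable family $\{f\circ T^j:j\in\Z\}$, is also valid.
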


\begin{proof}
The factor map $\Phi_f$ is then a conjugacy, so the result follows
from Proposition~\ref{prop:bilateral-rigid-factor}(ii).
\end{proof}

\begin{remark}
\label{rem:one-function-not-whole-system}
A single rigid observable need not make the original system uniformly rigid.
Let $R_\alpha:x\mapsto x+\alpha$ be an irrational circle rotation, let
$\sigma:\{0,1\}^{\Z}\to\{0,1\}^{\Z}$ be the two-sided full shift, and consider
$R_\alpha\times\sigma$.  Every nonconstant function depending only on the
circle coordinate is rigid along a sequence $(q_k)\subseteq\N$ satisfying
$q_k\to\infty$ and
$\|q_k\alpha\|_{\Torus}\to0$.  The product is not uniformly rigid,
because the full shift is a factor and is not uniformly rigid.  Recall that a
homeomorphism is \emph{expansive} if there
is $c>0$ such that
\[
 d(T^nx,T^ny)<c\quad(n\in\Z)
 \quad\Longrightarrow\quad x=y;
\]
such a number $c$ is an expansive constant.  Indeed, if
$r\in\N$ and $D(T^r,\id_X)<c$, then
\[
 d(T^nT^r x,T^nx)<c\qquad(x\in X,\ n\in\Z),
\]
so expansiveness forces $T^r=\id$; this is impossible for a positive power
of the full shift.  The rigid observable detects the rotation factor, exactly
as Proposition~\ref{prop:bilateral-rigid-factor} predicts.
\end{remark}

\subsection{Uniform rigidity and tail equicontinuity}
\label{subsec:uniform-tail-equicontinuity}

For completeness, we record the IP-system formulation related to
Theorem~\ref{thmA:intro}.  For a set $A\subseteq\N$, $A$-equicontinuity
means equicontinuity of the family $\{T^n:n\in A\}$.  Huang and Ye \cite[Lemma~4.1]{HuangYe} proved the following result:

\begin{proposition}
\label{prop:uniform-iff-tail-equicont}
A topological dynamical system is uniformly rigid if and only if there is a
sequence $(p_i)\subseteq\N$ for which the associated IP-system
$T_\alpha=T^{p_\alpha}$ is tail equicontinuous.
\end{proposition}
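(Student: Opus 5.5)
The plan is to prove the two implications separately, using the subadditivity estimate \eqref{eq:uniform-subadditivity} for the forward direction and a direct diagonal extraction for the converse.

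\emph{Uniformly rigid $\Rightarrow$ tail equicontinuous.}  Suppose $D(T^{r_k},\id_X)\to0$.  Pass to a subsequence and set $p_i=r_{k_i}$, chosen so that
\[
 D(T^{p_i},\id_X)<2^{-i}.
\]
Iterating \eqref{eq:uniform-subadditivity} over a finite sum gives, for every $\beta\in\Fin$,
\[
 D(T^{p_\beta},\id_X)\leq\sum_{i\in\beta}2^{-i}<2^{1-\min\beta}.
\]
Given $\varepsilon>0$, choose $\eta=\{N\}$ with $2^{1-N}<\varepsilon/3$ and put $\delta=\varepsilon/3$.  For $\beta>\eta$ and $d(x,y)<\delta$, the triangle inequality yields
\[
 d(T^{p_\beta}x,T^{p_\beta}y)\leq d(T^{p_\beta}x,x)+d(x,y)+d(y,T^{p_\beta}y)<\varepsilon.
\]
This is exactly tail equicontinuity.

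\emph{Tail equicontinuous $\Rightarrow$ uniformly rigid.}  This is the harder direction.  The plan is to combine tail equicontinuity with pointwise IP-recurrence to obtain uniform closeness along a subsequence of finite sums.
\begin{enumerate}[label=\textup{(\arabic*)}]
\item Fix $\varepsilon$, and let $\delta,\eta$ be as in tail equicontinuity for $\varepsilon$.  Cover $X$ by finitely many points $x_1,\dots,x_m$ forming a $\delta$-net.
\item Apply Hindman/IP-recurrence for the IP-system $T^{p_\alpha}$ with $\alpha>\eta$.  By Furstenberg's IP recurrence applied to the product $X^m$ and the point $(x_1,\dots,x_m)$, together with a compactness argument over IP-limits, find $\beta>\eta$ with $d(T^{p_\beta}x_j,x_j)<\varepsilon$ for all $j$.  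Concretely, take an idempotent ultrafilter $u$ on $\Fin$ supported on tails.  By compactness, $u$-$\lim T^{p_\alpha}x$ exists pointwise.  By equicontinuity this limit map $\Psi$ is continuous, and the idempotence of $u$ together with the homomorphism property $T_{\alpha\cup\beta}=T_\alpha T_\beta$ makes $\Psi$ an idempotent continuous map, $\Psi^2=\Psi$.
\item For any $x$, pick $x_j$ with $d(x,x_j)<\delta$.  Then
\[
 d(T^{p_\beta}x,x)\leq d(T^{p_\beta}x,T^{p_\beta}x_j)+d(T^{p_\beta}x_j,x_j)+d(x_j,x)<3\varepsilon.
\]
\end{enumerate}

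The main obstacle is step (2).  Idempotence of $\Psi$ only gives $\Psi^2=\Psi$, not $\Psi=\id$, so recurrence need not hold at every point.  This is where invertibility must be used.  Equicontinuity along the tail passes to limits, so the $u$-limits of $T^{-p_\alpha}$ are also controlled, and one expects $\Psi$ to be a bijection; an idempotent bijection is the identity.  To get a bijection, I would try to show that the enveloping family $\{u\text{-}\lim T^{p_\alpha}\}$ forms an equicontinuous compact semigroup.  Compact equicontinuous semigroups of continuous surjections (each $T^{p_\alpha}$ is onto) act by homeomorphisms, and the only idempotent homeomorphism is $\id$.  Hence $T^{p_\alpha}\to\id$ uniformly along $u$, by equicontinuity plus pointwise convergence on compact $X$.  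Extracting $\beta_1<\beta_2<\cdots$ with $D(T^{p_{\beta_k}},\id)\to0$ then gives a strictly increasing uniform rigidity sequence, after passing to a subsequence with $p_{\beta_k}$ increasing.  This is possible because $p_{\beta}\geq\min\beta$ grows along tails.

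If the surjectivity argument proves delicate, the fallback is the Ellis-type lemma: in a compact equicontinuous semigroup $E$ of continuous maps on a compact metric space, $E$ is a group of homeomorphisms whenever its generators are surjective, so its unique idempotent is $\id_X$.
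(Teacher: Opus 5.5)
The forward direction is correct. The paper itself gives no argument for this proposition; it quotes Huang--Ye's Lemma~4.1.

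The converse has a genuine gap. Setting up the idempotent-ultrafilter limit $\Psi=u\text{-}\lim T^{p_\alpha}$ is a sound start. But everything rests on showing $\Psi=\id_X$, and that step is only asserted.
\begin{itemize}
\item The first sentence of step~(2) is false as a general principle. IP-recurrence produces \emph{some} recurrent point, not recurrence of prescribed points $x_1,\dots,x_m$; the isolated point $0$ in Example~\ref{ex:compactified-integers} never returns. Under tail equicontinuity every point does return, but only as a consequence of the result being proved.
\item Your repair via invertibility is unsupported. Tail equicontinuity of $\{T^{p_\alpha}\}$ gives no a priori control of $\{T^{-p_\alpha}\}$, since equicontinuity of a family of homeomorphisms does not pass to inverses in general. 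So ``one expects $\Psi$ to be a bijection'' has nothing behind it.
\item The Ellis-type fallback is misapplied. The elements of your limit semigroup are the maps $u$-$\lim T^{p_\alpha}$, not the $T^{p_\alpha}$ themselves. Their surjectivity is exactly what has to be shown.
\end{itemize}

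The missing step needs no inverses. It uses only surjectivity of each $T^{p_\alpha}$ and the net argument from your step~(3), run with $\Psi$ in place of $\id_X$.
\begin{itemize}
\item \emph{$\Psi$ is continuous.} Given $\varepsilon$, take $\delta,\eta$ from tail equicontinuity. Passing to the limit along $u$ gives $d(x,y)<\delta\Rightarrow d(\Psi x,\Psi y)\le\varepsilon$.
\item \emph{Convergence is uniform.} For a finite $\delta$-net $x_1,\dots,x_m$, the set of $\alpha>\eta$ with $d(T^{p_\alpha}x_j,\Psi x_j)<\varepsilon$ for all $j$ belongs to $u$. On that set $D(T^{p_\alpha},\Psi)\le3\varepsilon$, so $T^{p_\alpha}\to\Psi$ uniformly along $u$.
\item \emph{$\Psi$ is onto.} For $y\in X$ choose $x_\alpha$ with $T^{p_\alpha}x_\alpha=y$ and put $x=u\text{-}\lim x_\alpha$. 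Then $d(\Psi x,y)\le d(\Psi x,\Psi x_\alpha)+D(\Psi,T^{p_\alpha})\to0$ along $u$.
\item \emph{$\Psi=\id_X$.} A surjective idempotent is the identity: if $y=\Psi x$, then $\Psi y=\Psi^2x=\Psi x=y$. Hence $D(T^{p_\alpha},\id_X)\to0$ along $u$.
\end{itemize}

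Your extraction remark $p_\beta\ge\min\beta$ is false (take $p_i\equiv1$), but the error is harmless. For each $k$ pick $n_k=p_{\alpha_k}\ge1$ with $D(T^{n_k},\id_X)<1/k$. If $(n_k)$ is unbounded, pass to a strictly increasing subsequence. Otherwise some value $n$ recurs infinitely often, which forces $T^n=\id_X$, and then $r_k=kn$ works.
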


\section{Locally SIP-rigid functions and mild mixing}
\label{sec:theorem-b}

This section develops the functional characterization of mild mixing.  After
recalling the eigenfunction analogy and explaining why global rigidity is not
the appropriate local criterion, we introduce locally SIP-rigid functions and
prove that their absence is equivalent to topological mild mixing.

\subsection{The eigenfunction analogy}
\label{subsec:eigenfunction-analogy}

Continuous eigenfunctions provide the classical functional obstruction to
weak mixing.  The following is the single-homeomorphism case of
\cite[Corollary~2.11]{KeynesRobertson}.

\begin{lemma}[Keynes--Robertson]
\label{lem:keynes-robertson}
Let $(X,T)$ be a minimal topological dynamical system.  Then $(X,T)$ is
weakly mixing if and only if every continuous function
$g:X\to\mathbb C$ satisfying
\[
 g\circ T=\lambda g
 \qquad(\lambda\in\Torus)
\]
is constant.
\end{lemma}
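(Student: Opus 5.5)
The plan is to prove the two implications separately. The forward direction is a short invariance argument. The converse goes through the maximal equicontinuous factor and Pontryagin duality.

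\emph{Weak mixing excludes nonconstant eigenfunctions.} Suppose $g\circ T=\lambda g$ with $\lambda\in\Torus$ and $g$ continuous.
\begin{enumerate}[label=\textup{(\alph*)}]
\item Since $|\lambda|=1$, we have $|g|\circ T=|g|$. By minimality, $|g|$ is constant on the dense orbit of any point, hence constant on $X$.
\item If $|g|\equiv 0$ there is nothing to prove. Otherwise normalize so that $|g|\equiv1$.
\item Put $G(x,y)=g(x)\overline{g(y)}$ on $X\times X$. Then $G$ is continuous and $(T\times T)$-invariant.
\item Weak mixing says $T\times T$ is transitive. For a homeomorphism of a compact metric space, a Baire category argument gives a point with dense forward orbit. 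Hence $G$ is constant, and $G(x,x)=1$ forces $G\equiv1$.
\item Therefore $g(x)=g(y)$ for all $x,y$, so $g$ is constant.
\end{enumerate}

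\emph{Failure of weak mixing produces an eigenfunction.} Assume $(X,T)$ is minimal but not weakly mixing.
\begin{enumerate}[label=\textup{(\alph*)}]
\item Let $Q\subseteq X\times X$ be the regionally proximal relation. For minimal systems, the Ellis--Gottschalk and Veech theory shows that $Q$ is a closed invariant equivalence relation. Moreover, $X/Q$ is the maximal equicontinuous factor $(Z,R)$.
\item Show that $Z$ is nontrivial. This is where the standing hypothesis is used, via the step described below.
\item Once $Z$ is nontrivial, a minimal equicontinuous system is conjugate to a minimal rotation $z\mapsto z+a$ on a compact monothetic abelian metrizable group $Z$. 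A compatible invariant metric is obtained by averaging over the enveloping group.
\item By Pontryagin duality, the characters separate the points of $Z$. So some character $\chi\neq1$ exists, and $\chi$ is nonconstant.
\item It satisfies $\chi(z+a)=\chi(a)\chi(z)$. Hence $g=\chi\circ q$, with $q:X\to Z$ the factor map, is a nonconstant continuous eigenfunction with eigenvalue $\chi(a)\in\Torus$.
\end{enumerate}

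\emph{Main obstacle.} The hardest step is item (b) of the converse: for minimal $X$, showing that $Q=X\times X$ forces weak mixing. Equivalently, failure of weak mixing forces $Q\neq X\times X$. The approach I would try first is Petersen's argument.
\begin{enumerate}[label=\textup{(\alph*)}]
\item Suppose $Q=X\times X$, and fix nonempty open sets $U_1,V_1,U_2,V_2$.
\item Using minimality, choose finitely many iterates so that translates of $U_1$ cover $X$.
\item Regional proximality of suitable pairs then yields a common time $n$ with $U_1\cap T^{-n}V_1\neq\emptyset$ and $U_2\cap T^{-n}V_2\neq\emptyset$. This is transitivity of $T\times T$.
\item Concretely, the aim is to show that $N(U,U)\cap N(U,V)$ is nonempty for every pair of nonempty open $U,V$. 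That is the standard criterion for weak mixing in the minimal case.
\end{enumerate}
If this step becomes too lengthy, I would instead cite the classical equivalence, for minimal systems, between weak mixing and triviality of the maximal equicontinuous factor. The remaining steps are routine.
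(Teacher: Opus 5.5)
The paper gives no proof of this lemma. It quotes it as the single-homeomorphism case of Keynes--Robertson's Corollary~2.11 and uses it only as motivation in Subsection~\ref{subsec:eigenfunction-analogy}; nothing later depends on it. Your argument is therefore a genuinely different route, and it is sound in outline.

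Your forward direction is complete and elementary. Minimality is not even needed there, since transitivity of $T\times T$ makes $T$ transitive, which already forces $|g|$ to be constant.

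Your converse reduces the lemma to classical structure theory:
\begin{enumerate}
\item Veech's theorem that $Q$ is the equicontinuous structure relation of a minimal $\Z$-system;
\item the representation of minimal equicontinuous systems as monothetic group rotations;
\item separation of points by characters.
\end{enumerate}
What this buys is an explanation: continuous eigenfunctions come from the maximal equicontinuous factor, parallel to the correspondence between $\mathcal A_{\mathbf r}(X,T)$ and the maximal factor uniformly rigid along $\mathbf r$ in Theorem~\ref{thm:maximal-rigid-factor}. Note, however, that your fallback in step (b) is, given steps (c)--(e), just the Keynes--Robertson theorem restated. That fallback cites the equivalence, for minimal systems, of weak mixing with triviality of the maximal equicontinuous factor. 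With it you prove nothing beyond the paper's citation.

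If you want step (b) actually proved, the phrase ``regional proximality of suitable pairs'' needs a time shift. Covering $X$ by translates of $U_1$ is not the right cover.
\begin{enumerate}
\item Given nonempty open $U_1,V_1,U_2,V_2$, minimality gives $t\geq0$ with $O=V_1\cap T^{-t}V_2\neq\emptyset$.
\item Cover $X$ by $T^{-a_1}O,\ldots,T^{-a_s}O$ with $a_\ell\geq0$, and let $\delta$ be a Lebesgue number of this cover.
\item Since $Q=X\times X$, a pair in $U_2\times T^{t}U_1$ yields $p'\in U_2$, $q'\in T^{t}U_1$ and $n$ with $d(T^np',T^nq')<\delta$. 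Hence $T^{n+a_\ell}p'$ and $T^{n+a_\ell}q'$ both lie in $O$ for some $\ell$.
\item Put $m=n+a_\ell+t$. Then $T^m(T^{-t}q')\in O\subseteq V_1$ with $T^{-t}q'\in U_1$, and $T^mp'\in T^tO\subseteq V_2$.
\end{enumerate}
So $m\in N(U_1,V_1)\cap N(U_2,V_2)$. This gives transitivity of $T\times T$ directly, without Petersen's $N(U,U)\cap N(U,V)$ criterion.

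Because $Q$ is normally defined with $n\in\Z$, $m$ may be nonpositive. This is harmless. An infinite minimal $X$ has no isolated points, and for a homeomorphism of such a space two-sided transitivity of $T\times T$ implies the positive-time transitivity used in the paper. For finite minimal $X$, $Q$ is the diagonal, so $Q=X\times X$ forces $X$ to be a single point.
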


The eigenfunction equation is global and uses a single time.  Mild mixing is
instead controlled by whole symmetric IP tails, so its exact functional
obstruction must be localized.

\subsection{Why global rigid functions are not the local criterion}
\label{subsec:global-versus-local}

Proposition~\ref{prop:bilateral-rigid-factor}(iv) and
Lemma~\ref{lem:mild-no-uniform-factor} give
\[
 \text{mild mixing}
 \quad\Longrightarrow\quad
 \text{every globally rigid continuous function is constant}.
\]
Conversely, the absence of nonconstant globally rigid functions is equivalent
to the absence of nontrivial uniformly rigid factors.  In the minimal
category, whether this implies mild mixing is exactly
Question~\ref{ques:huang-ye}.  Thus a global rigid observable does not
\emph{presently} give a known characterization of minimal mild mixing; we do
not assert that the converse fails for minimal systems.

Remark~\ref{rem:one-function-not-whole-system} gives a different, elementary
distinction: one rigid function can detect a proper uniformly rigid factor
without making the whole system uniformly rigid.  Outside the minimal
category, the failure of a global-function characterization is explicit.

There are also nontrivial minimal systems in which global rigid functions
abound: Glasner and Maon constructed minimal systems that are simultaneously
weakly mixing and uniformly rigid
\cite[Proposition~6.5]{GlasnerMaon}.  Along a uniform rigidity sequence every
continuous function on such a system is rigid.
Lemma~\ref{lem:mild-no-uniform-factor} shows that these examples are not
mildly mixing.  Thus replacing ``weakly mixing'' by ``mildly mixing'' in the
eigenfunction analogy requires a genuinely stronger obstruction than the
absence of eigenfunctions.

\begin{example}[No rigid observable, but not mildly mixing]
\label{ex:compactified-integers}
Let
\[
 X_\infty=\Z\cup\{\infty\}
\]
be the one-point compactification of the discrete integers, and define the
homeomorphism
\[
 T(n)=n+1,\qquad T(\infty)=\infty.
\]
The system is not mildly mixing.  Indeed, $\{0\}$ is open and
\[
 N(\{0\},\{0\})=\emptyset,
\]
so the $\SIP^{*}$ condition fails.

Nevertheless, $X_\infty$ has no nonconstant rigid continuous function.
Suppose $f\in C(X_\infty)$ and
\[
 \|f\circ T^{r_k}-f\|_\infty\longrightarrow0
\]
for some strictly increasing sequence $(r_k)\subseteq\N$.  Fix $m\in\Z$
and put $n_k=m-r_k$.  Then
$|n_k|\to\infty$, and hence $n_k$ converges to the point $\infty$
in the one-point compactification.  Therefore
\[
 f(n_k)\longrightarrow f(\infty).
\]
On the other hand,
\[
 |f(m)-f(n_k)|
 =|f(T^{r_k}n_k)-f(n_k)|
 \leq\|f\circ T^{r_k}-f\|_\infty\longrightarrow0.
\]
Thus $f(m)=f(\infty)$ for every $m\in\Z$, and $f$ is constant.
\end{example}

The compactified-integers system also appears naturally in the recurrence
discussion of \cite[Proposition~4.6]{GlasnerWeiss}.  The example shows why the
minimal hypothesis in Question~\ref{ques:huang-ye} is substantive.

\subsection{Local SIP-rigidity and proof of Theorem~\ref{thmB:intro}}
\label{subsec:local-sip}

Recall that, for a bounded function $h$ and a nonempty set $U\subseteq X$,
we write
\[
 \|h\|_U=\sup_{x\in U}|h(x)|.
\]

\begin{definition}
\label{def:local-sip-rigid}
Let $(r_i)_{i\geq1}\subseteq\N$ be strictly increasing.  A nonconstant
function $f\in C(X,[0,1])$ is \emph{locally SIP-rigid along
$(r_i)$} if there is a nonempty open set $U\subseteq X$ such that
\begin{equation}
 \lim_{M\to\infty}
 \sup_{n\in\SIP(r_i:i>M)}
 \|f\circ T^n-f\|_U=0.
 \label{eq:local-sip-rigid}
\end{equation}
\end{definition}

A function is called \emph{locally SIP-rigid} if it is locally
SIP-rigid along some strictly increasing sequence in $\N$ in the sense of
Definition~\ref{def:local-sip-rigid}.  The normalization $f:X\to[0,1]$
entails no loss of generality: every nonconstant real-valued continuous
function can be affinely rescaled, and the defining local norms are
multiplied by a fixed positive constant.

\begin{theorem}
\label{thm:mildmixing-sip-functions}
A topological dynamical system is mildly mixing if and only if it has no
nonconstant locally SIP-rigid continuous function.
\end{theorem}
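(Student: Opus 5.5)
Both directions go through Lemma~\ref{lem:sip-multiplier}: mild mixing holds exactly when $N(U,V)\in\SIP^{*}$ for all nonempty open $U,V$.

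\emph{Mild mixing excludes locally SIP-rigid functions.} Suppose $f\in C(X,[0,1])$ is nonconstant and locally SIP-rigid along $(r_i)$ on the open set $U$. I will find nonempty open $U'\subseteq U$ and $V$ such that $N(U',V)$ misses a whole tail set $\SIP(r_i:i>M)$; this contradicts Lemma~\ref{lem:sip-multiplier}, since a tail of an SIP-set is again an SIP-set.

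1. Because $f$ is nonconstant and $U$ is nonempty, choose $x_0\in U$ and $y_0\in X$ with $f(x_0)\neq f(y_0)$. Put $\varepsilon=|f(x_0)-f(y_0)|/4$.
2. Let $U'=U\cap\{x:|f(x)-f(x_0)|<\varepsilon\}$ and $V=\{y:|f(y)-f(y_0)|<\varepsilon\}$.
3. Choose $M$ so large that $\|f\circ T^n-f\|_U<\varepsilon$ for every $n\in\SIP(r_i:i>M)$.
4. If $x\in U'$ and $T^nx\in V$ for such an $n$, then $|f(T^nx)-f(x)|\ge 2\varepsilon$. This is impossible, so $N(U',V)\cap\SIP(r_i:i>M)=\emptyset$.

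\emph{Failure of mild mixing produces a locally SIP-rigid function.} This is the substantive direction. By Lemma~\ref{lem:sip-multiplier}, there are nonempty open $U,V$ and a strictly increasing $(r_i)$ with $N(U,V)\cap\SIP(r_i:i>0)=\emptyset$. Then $T^nU\cap V=\emptyset$ for every $n$ in the tail sets. The natural candidate is $f=\mathbf 1_V$, but it is not continuous. I therefore plan the following steps.

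1. Shrink to an open $V_0$ with $\overline{V_0}\subseteq V$.
2. Take a Urysohn function $f$ with $f=1$ on $\overline{V_0}$ and $\operatorname{supp}f\subseteq V$.
3. Aim to find a nonempty open $W$ with $T^nW\cap V=\emptyset$ for all $n\in\SIP(r_i:i>M)$ and also $W\cap V=\emptyset$. Then $f\circ T^n=0=f$ on $W$, so the supremum in \eqref{eq:local-sip-rigid} is $0$.

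The first requirement holds for $W=U$. For $W\cap V=\emptyset$ I would try $W=U\setminus\overline{V_0}$, after shrinking $V$ so that $U\not\subseteq\overline{V_0}$. If instead $U\subseteq V$, I would pass to a smaller open set.

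The main obstacle is that this $f$ is nonconstant, yet it vanishes identically on $W$. That is admissible under Definition~\ref{def:local-sip-rigid}, since only nonconstancy on $X$ is required. Still, I must check that $W$ is nonempty and that $f\ne 0$ somewhere; the latter holds because $V_0\neq\emptyset$.

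If degenerate cases make $W$ empty, I fall back on a different choice. I take $W=U$ and $f$ a Urysohn function that equals $0$ on $\bigcup_{n\in\SIP(r_i:i>M)}T^nU$ and $1$ on some point of $V_0$. This requires $V_0$ to be disjoint from the closure of that union, which I get by shrinking $V$ to a compact-closure neighbourhood. I would also keep the Akin--Glasner realization theorem behind Lemma~\ref{lem:sip-multiplier} as the key cited input. In that case $f\circ T^n=0$ on $U$, while $f$ itself need only be nearly $0$ on $U$. So I must additionally arrange $f=0$ on $U$ by including $U$ in the zero set. This needs $U\cap V_0=\emptyset$, which can be arranged by shrinking $U$ whenever $U\not\subseteq\overline{V}$. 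Handling the case $U\subseteq\overline V$ is the step I expect to require care. There I would replace the pair $(U,V)$ using $N(T^{-m}U\cap U,\,V)$-type manipulations, or use the symmetric structure of SIP-sets to swap the roles of $U$ and $V$.
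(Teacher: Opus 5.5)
Your first direction is correct and is the paper's argument. The converse has a genuine gap, and it sits where you expected trouble. However, you misdiagnose the problem, and the remedies you propose do not work.

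The case $U\subseteq\overline V$ is not the real obstruction. The hole $N(U,V)\cap\SIP(r_i:i>0)=\emptyset$ persists when $U$ and $V$ are shrunk. Hence your bump construction goes through whenever you can choose distinct points $a\in U$ and $b\in V$: take an open $O\ni b$ with $\overline O\subseteq V$ and $a\notin\overline O$, a bump supported in $O$, and $W=U\setminus\overline O$. As written, your choice $W=U\setminus\overline{V_0}$ does not give $W\cap V=\emptyset$, and your Urysohn $f$ may be positive on $V\setminus\overline{V_0}\supseteq W\cap V$. The bump must be supported inside the set you remove from $U$.

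The only case this misses is $U=V=\{x\}$ with $x$ isolated. It really occurs, for example with $\{0\}$ in $\Z\cup\{\infty\}$ under $n\mapsto n+1$. There every one of your candidates fails:
\begin{itemize}
\item $W$ is forced to equal $\{x\}=V$.
\item The fallback asks for $f=0$ on $U$ and $f=1$ at a point of $V_0\subseteq V=U$.
\item Swapping $U$ and $V$ changes nothing.
\item $T^{-m}U\cap U=\emptyset$ for all $m\geq1$ once $x$ is aperiodic.
\end{itemize}

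The missing idea is to look outside $U\cup V$.
\begin{enumerate}
\item The point $x$ cannot be periodic. If its period were $q$, then $N(\{x\},\{x\})=q\N$. But $q\N\in\SIP^{*}$: among the $q+1$ partial sums $0,s_1,\dots,s_1+\cdots+s_q$ of any strictly increasing sequence, two are congruent modulo $q$, and their positive difference lies in $q\N\cap\SIP(s_i:i>0)$.
\item Hence $T^{-1}x$ is an isolated point, being the image of the open set $\{x\}$ under the homeomorphism $T^{-1}$. It differs from every $T^nx$ with $n\geq0$, since otherwise $x$ would be periodic.
\item So $f=\mathbf 1_{\{T^{-1}x\}}$ is continuous and nonconstant, and $f(T^nx)=f(x)=0$ for all $n\geq1$. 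With $U=\{x\}$ this $f$ is locally SIP-rigid.
\end{enumerate}
Without this step, or an equivalent argument producing a point outside the closure of $U\cup\bigcup_{n}T^nU$, your converse is incomplete.
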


\begin{proof}
Assume first that $(X,T)$ is mildly mixing, and suppose that $f$ is
locally SIP-rigid on a nonempty open set $U$ along $(r_i)$.  Choose
$x_0\in U$ and $y\in X$ with
\[
 \epsilon=|f(y)-f(x_0)|>0.
\]
By continuity there are nonempty open sets $U_0\subseteq U$ and $V\ni y$
such that
\[
 |f(v)-f(u)|>\epsilon/2
 \quad(u\in U_0,\ v\in V).
\]
Choose $M$ so large that
\[
 \sup_{n\in\SIP(r_i:i>M)}
 \|f\circ T^n-f\|_U<\epsilon/2.
\]
If $n\in N(U_0,V)\cap\SIP(r_i:i>M)$, choose
$z\in U_0$ with $T^nz\in V$.  The two preceding inequalities then
contradict each other.  Hence
\[
 N(U_0,V)\cap\SIP(r_i:i>M)=\emptyset,
\]
contrary to Lemma~\ref{lem:sip-multiplier}.  Thus a mildly mixing system has
no nonconstant locally SIP-rigid function.

Conversely, suppose that $(X,T)$ is not mildly mixing.  By
Lemma~\ref{lem:sip-multiplier}, there are nonempty open sets
$A,B\subseteq X$ and a strictly increasing sequence
$(r_i)\subseteq\N$ such that
\begin{equation}
 N(A,B)\cap\SIP(r_i:i>0)=\emptyset.
 \label{eq:sip-hole}
\end{equation}
Assume first that one can choose distinct points $a\in A$ and $b\in B$.
Choose disjoint nonempty open sets
\[
 a\in U\subseteq A,\qquad
 b\in O,\qquad \overline O\subseteq B.
\]
Put
\[
 \delta=\dist(\{b\},X\setminus O)>0,
 \qquad
 f(x)=\max\left\{0,1-\frac{d(x,b)}{\delta}\right\}.
\]
Then $f$ is nonconstant, continuous, and vanishes on $U$ and on
$X\setminus O$.  If $x\in U$ and
$n\in\SIP(r_i:i>0)$, equation~\eqref{eq:sip-hole} gives
$T^nx\notin B$, and hence
\[
 f(T^nx)=f(x)=0.
\]
Thus $f$ is locally SIP-rigid, in fact with exact equality on the whole
generated SIP-set.

The only case not covered above is $A=B=\{x\}$, where $x$ is isolated.
The point $x$ cannot be periodic.  Indeed, if its period were $q$, then
\[
 N(A,B)=q\N,
\]
and $q\N\in\SIP^{*}$.  To see the latter assertion, for an arbitrary
strictly increasing sequence $(s_i)\subseteq\N$, two of the $q+1$ partial sums
\[
 0,\ s_1,\ s_1+s_2,\ldots,s_1+\cdots+s_q
\]
are congruent modulo $q$; their positive difference belongs to
$q\N\cap\SIP(s_i:i>0)$.

Hence $x$ is aperiodic.  The point $T^{-1}x$ is isolated as well, so
\[
 f=\mathbf 1_{\{T^{-1}x\}}
\]
is continuous and nonconstant.  With $U=\{x\}$, one has
\[
 f(x)=f(T^nx)=0\qquad(n\geq1),
\]
because $T^nx=T^{-1}x$ would make $x$ periodic.  Thus $f$ is again
locally SIP-rigid.  This proves the converse.
\end{proof}

Theorem~\ref{thm:mildmixing-sip-functions} is
Theorem~\ref{thmB:intro}.

\begin{definition}
\label{def:sip-flat-bump}
A nonzero function $f\in C(X,[0,1])$ is an \emph{SIP-flat bump} if there
are a nonempty open set $U$ and a strictly increasing sequence
$(r_i)\subseteq\N$
such that
\[
 f|_U=0,
 \qquad
 \lim_{M\to\infty}
 \sup_{n\in\SIP(r_i:i>M)}\|f\circ T^n\|_U=0.
\]
It is \emph{exact} if the last norm is zero for every
$n\in\SIP(r_i:i>0)$.
\end{definition}

\begin{corollary}
\label{cor:sip-flat}
A topological dynamical system is mildly mixing if and only if it admits no
SIP-flat bump.  If mild mixing fails, the bump may be chosen exact and
Lipschitz for the fixed compatible metric.
\end{corollary}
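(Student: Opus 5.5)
The plan is to reduce both directions to Theorem~\ref{thm:mildmixing-sip-functions} and to reread the functions built in its proof.

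\emph{A bump rules out mild mixing.} Let $f$ be an SIP-flat bump with data $U$ and $(r_i)$. Since $f|_U=0$ and $f$ is nonzero, $f$ is nonconstant. Moreover, for every $n$,
\[
 \|f\circ T^n-f\|_U=\|f\circ T^n\|_U .
\]
So the defining limit of an SIP-flat bump is exactly condition~\eqref{eq:local-sip-rigid}. Hence $f$ is a nonconstant locally SIP-rigid function, and Theorem~\ref{thm:mildmixing-sip-functions} shows that $(X,T)$ is not mildly mixing.

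\emph{Failure of mild mixing gives an exact Lipschitz bump.} Suppose $(X,T)$ is not mildly mixing. I will reuse the converse part of the proof of Theorem~\ref{thm:mildmixing-sip-functions} and check that the functions built there already have the required form.

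In the first case there are distinct points $a\in A$ and $b\in B$. The function is $f(x)=\max\{0,1-d(x,b)/\delta\}$.
\begin{enumerate}[label=\textup{(\alph*)}]
\item It is nonzero, since $f(b)=1$.
\item It vanishes on $U$, because $U\cap O=\emptyset$ and $f=0$ off $O$.
\item It is $1/\delta$-Lipschitz, as a truncation of a Lipschitz function.
\item For $x\in U$ and $n\in\SIP(r_i:i>0)$, \eqref{eq:sip-hole} gives $T^nx\notin B\supseteq O$, so $f(T^nx)=0$. This is the exact condition.
\end{enumerate}

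In the second case $A=B=\{x\}$ with $x$ isolated and aperiodic, and the function is $f=\mathbf 1_{\{p\}}$ with $p=T^{-1}x$.
\begin{enumerate}[label=\textup{(\alph*)}]
\item Aperiodicity gives $p\neq x$. Hence $f(x)=0$, so $f|_{\{x\}}=0$, and $f\neq0$.
\item The proof of the theorem shows $T^nx\neq p$ for all $n\geq1$. Hence $f(T^nx)=0$ for all $n\geq1$, which covers every $n\in\SIP(r_i:i>0)$ and gives exactness.
\item Since $p$ is isolated, $c=\dist(\{p\},X\setminus\{p\})$ is positive, and $X\setminus\{p\}$ is nonempty because it contains $x$. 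Then $\mathbf 1_{\{p\}}$ is $1/c$-Lipschitz.
\end{enumerate}

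The only point needing care is this Lipschitz check for the indicator, which requires $p$ to be isolated and $X\neq\{p\}$; both hold here. The rest is bookkeeping against the earlier proof.
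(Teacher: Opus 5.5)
Your proposal is correct and follows the paper's own proof: an SIP-flat bump is locally SIP-rigid on its zero set because $\|f\circ T^n-f\|_U=\|f\circ T^n\|_U$ there, and the two functions built in the converse half of Theorem~\ref{thm:mildmixing-sip-functions} are already exact bumps. The first (the truncated distance bump) is Lipschitz with constant $1/\delta$, and the second is Lipschitz because $\dist(\{T^{-1}x\},X\setminus\{T^{-1}x\})>0$. Your explicit checks, that $X\setminus\{p\}\neq\emptyset$ and that $p\neq x$ by aperiodicity, only fill in details the paper leaves implicit.
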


\begin{proof}
Every SIP-flat bump is locally SIP-rigid on its open zero set.  The proof of
Theorem~\ref{thm:mildmixing-sip-functions} constructs an exact bump when
mild mixing fails.  In the distinct-point case, the distance bump is
Lipschitz.  In the isolated-point case, the singleton and its compact
complement have positive distance, so its indicator is Lipschitz as well.
\end{proof}

\section{Locally IP-rigid functions in minimal systems}
\label{sec:theorem-c}

This section specializes the local rigidity theory to minimal systems.  We
introduce the function, pseudometric, and IP-flat bump formulations of local
IP-rigidity and use the minimal $\operatorname{IP}^{*}$ return-time criterion
to prove Theorem~\ref{thmC:intro}.

\subsection{Definitions}
\label{subsec:local-ip-definitions}

Recall that a \emph{pseudometric} on $X$ satisfies the usual metric axioms
except that distinct points may have distance zero; it is continuous when it
is continuous as a function on $X\times X$.

\begin{definition}
\label{def:local-ip-rigid}
A nonconstant function $f\in C(X,[0,1])$ is \emph{locally IP-rigid} if
there are a nonempty open set $U\subseteq X$ and a sequence
$(p_i)_{i\geq1}\subseteq\N$ such that
\begin{equation}
 \IPlim_{\alpha\in\Fin}
 \|f\circ T^{p_\alpha}-f\|_U=0.
 \label{eq:local-ip-rigid}
\end{equation}

A nonzero continuous pseudometric $\rho$ on $X$ is
\emph{locally IP-rigid} if, for some $U$ and $(p_i)$,
\[
 \IPlim_{\alpha\in\Fin}
 \sup_{x\in U}\rho(T^{p_\alpha}x,x)=0.
\]
\end{definition}

As in Definition~\ref{def:local-sip-rigid}, affine normalization shows that
restricting the range to $[0,1]$ does not change the class of
obstructions.

\begin{definition}
\label{def:ip-flat-bump}
A nonzero function $f\in C(X,[0,1])$ is an \emph{IP-flat bump} if there
are a nonempty open set $U\subseteq X$ and a sequence
$(p_i)\subseteq\N$ such that
\[
 f|_U=0,
 \qquad
 \IPlim_{\alpha\in\Fin}
 \|f\circ T^{p_\alpha}\|_U=0.
\]
It is \emph{exact} if the last norm is zero for every
$\alpha\in\Fin$.
\end{definition}

The word ``local'' is essential: the convergence is required only on $U$,
not on all of $X$.

\subsection{Proof of Theorem~\ref{thmC:intro}}
\label{subsec:proof-theorem-c}

\begin{theorem}
\label{thm:minimal-ip-functions}
Let $(X,T)$ be minimal.  The following are equivalent:
\begin{enumerate}[label=\textup{(\roman*)}]
\item $(X,T)$ is mildly mixing;
\item there is no locally IP-rigid continuous function on $X$;
\item there is no locally IP-rigid continuous pseudometric on $X$;
\item there is no exact IP-flat bump which is Lipschitz with respect to a
      fixed compatible metric on $X$.
\end{enumerate}
\end{theorem}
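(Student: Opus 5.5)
I will prove the cycle \textup{(i)}$\Rightarrow$\textup{(iii)}$\Rightarrow$\textup{(ii)}$\Rightarrow$\textup{(iv)}$\Rightarrow$\textup{(i)}, modelled on the proof of Theorem~\ref{thm:mildmixing-sip-functions}. The new ingredient is Lemma~\ref{lem:minimal-ipstar}, which replaces the SIP-criterion by the IP-criterion in the minimal case.

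For \textup{(i)}$\Rightarrow$\textup{(iii)}, suppose $\rho$ is a nonzero continuous pseudometric, locally IP-rigid on $U$ along $(p_i)$. I want a point of $U$ and a point at positive $\rho$-distance from it. If $\rho$ vanishes on $U\times X$, the triangle inequality forces $\rho\equiv0$, so there are $x_0\in U$ and $y\in X$ with $\epsilon=\rho(x_0,y)>0$. By continuity of $\rho$, choose open sets $U_0\ni x_0$ with $U_0\subseteq U$ and $V\ni y$ such that $\rho(u,v)>\epsilon/2$ for all $u\in U_0$, $v\in V$. Next pick $\eta$ with $\sup_{x\in U}\rho(T^{p_\alpha}x,x)<\epsilon/2$ for every $\alpha>\eta$. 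The set $\{p_\alpha:\alpha>\eta\}=\operatorname{FS}(p_i:i>\max\eta)$ is an IP-set. If $n=p_\alpha$ lay in $N(U_0,V)$, a witness $z\in U_0$ with $T^nz\in V$ would contradict both estimates at once. Hence $N(U_0,V)$ misses an IP-set, contradicting Lemma~\ref{lem:minimal-ipstar}.

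For \textup{(iii)}$\Rightarrow$\textup{(ii)}, take a nonconstant locally IP-rigid $f$ and set $\rho(x,y)=|f(x)-f(y)|$. This is a nonzero continuous pseudometric with $\rho(T^{p_\alpha}x,x)=|f\circ T^{p_\alpha}-f|(x)$, so it is locally IP-rigid. For \textup{(ii)}$\Rightarrow$\textup{(iv)}, an exact IP-flat bump $f$ is nonzero, vanishes on $U$, and is therefore nonconstant. On $U$ we have $f\circ T^{p_\alpha}-f=f\circ T^{p_\alpha}$, so $f$ is locally IP-rigid.

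The remaining step, \textup{(iv)}$\Rightarrow$\textup{(i)}, is the main one, argued contrapositively. If $(X,T)$ is not mildly mixing, Lemma~\ref{lem:minimal-ipstar} gives nonempty open $A,B$ and a sequence $(p_i)$ with $N(A,B)\cap\operatorname{FS}(p_i)=\emptyset$. First I need $X$ to be infinite, so that distinct points $a\in A$ and $b\in B$ exist. A finite minimal system is a single periodic orbit of some period $q$, and then every $N(A,B)$ contains a residue class $c+q\N$. Such a set is $\operatorname{IP}^{*}$: any sequence has $q$ partial sums, and either one is $\equiv0$ or two agree modulo $q$, which yields a finite sum divisible by $q$. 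Translating this by $c$ requires care. It is easier to observe that a finite periodic orbit is mildly mixing only when it is a single point, which is trivially mildly mixing. When $q\ge2$, the system is not mildly mixing but has no Lipschitz bump vanishing on a nonempty open set while remaining nonzero and flat. Hmm — here one must actually check that \textup{(iv)} fails for finite orbits. Take $U=\{x\}$ and $f=\mathbf 1_{\{Tx\}}$ along $p_i=q$; then $T^{p_\alpha}x=x$, so $f\circ T^{p_\alpha}$ vanishes on $U$ and this is an exact Lipschitz bump. So the finite case is handled directly. In the infinite case I choose disjoint open sets $a\in U\subseteq A$ and $b\in O$ with $\overline O\subseteq B$, and take the distance bump $f(x)=\max\{0,1-d(x,b)/\delta\}$ as before. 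It is Lipschitz, vanishes on $U$, and satisfies $f(T^{p_\alpha}x)=0$ for $x\in U$ because $T^{p_\alpha}x\notin B$. This makes $f$ an exact IP-flat bump, and the main obstacle is only this careful handling of the degenerate finite/isolated case.
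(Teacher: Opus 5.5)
Your proof is correct and is essentially the paper's argument. The forward direction via the $\operatorname{IP}^{*}$ criterion of Lemma~\ref{lem:minimal-ipstar}, the distance bump in the infinite case, and the bump $\mathbf 1_{\{Tx\}}$ along $p_i=q$ on a periodic orbit are exactly what the paper uses. The only reorganization is that you prove \textup{(i)}$\Rightarrow$\textup{(iii)} directly for pseudometrics, using the triangle inequality to find $x_0\in U$ with $\rho(x_0,y)>0$. The paper instead proves \textup{(i)}$\Rightarrow$\textup{(ii)} and converts $\rho$ into the function $\rho(\cdot,a)/C$.

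You should delete the abandoned remarks in the finite case, because both are false. A residue class $c+q\N$ with $q\nmid c$ is not $\operatorname{IP}^{*}$, since it misses $\operatorname{FS}(q,q,\ldots)$. And a periodic orbit of period $q\geq2$ does admit an exact Lipschitz IP-flat bump, as your own construction immediately afterwards shows.
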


\begin{proof}
We first compare functions and pseudometrics.  If $f$ is locally
IP-rigid, then
\[
 \rho_f(x,y)=|f(x)-f(y)|
\]
is a nonzero locally IP-rigid continuous pseudometric.  Conversely, suppose
$\rho$ is such a pseudometric.  Choose $a,b\in X$ with
$\rho(a,b)>0$, put
\[
 C=\max\{1,\max_{x\in X}\rho(x,a)\},
 \qquad
 f(x)=\rho(x,a)/C.
\]
Then $f$ is nonconstant and
\[
 |f(T^{p_\alpha}x)-f(x)|
 \leq C^{-1}\rho(T^{p_\alpha}x,x).
\]
Thus \textup{(ii)} and \textup{(iii)} are equivalent.

Assume \textup{(i)} and let $f$ satisfy
\eqref{eq:local-ip-rigid} on a nonempty open set $U$.  Choose
$x_0\in U$, $y\in X$, and
\[
 \epsilon=|f(y)-f(x_0)|>0.
\]
There are nonempty open sets $U_0\subseteq U$ and $V\ni y$ such that
\[
 |f(v)-f(u)|>\epsilon/2
 \quad(u\in U_0,\ v\in V).
\]
Choose $\eta\in\Fin$ so that
\[
 \|f\circ T^{p_\alpha}-f\|_U<\epsilon/2
 \quad(\alpha>\eta).
\]
It follows that
\[
 N(U_0,V)\cap
 \operatorname{FS}(p_i:i>\max\eta)=\emptyset,
\]
contradicting the $\operatorname{IP}^{*}$ criterion in
Lemma~\ref{lem:minimal-ipstar}.  Hence
\textup{(i)} implies \textup{(ii)}.

Every exact IP-flat bump is locally IP-rigid on its open zero set, so
\textup{(ii)} implies \textup{(iv)}.  It remains to prove the converse
contrapositively.  Suppose $(X,T)$ is not mildly mixing.  By
Lemma~\ref{lem:minimal-ipstar}, there are nonempty open sets
$A,B\subseteq X$ and an IP-set $F$ such that
\[
 F\cap N(A,B)=\emptyset.
\]
Choose $\operatorname{FS}(p_i)\subseteq F$.

First suppose $X$ is infinite.  An infinite minimal system has no isolated
points, so choose distinct $a\in A$ and $b\in B$.  Choose nonempty open
sets $U,W,O$ satisfying
\[
 a\in U\subseteq A,\qquad
 b\in W,\qquad
 \overline W\subseteq O\subseteq\overline O\subseteq B,
 \qquad
 U\cap\overline O=\emptyset.
\]
Put $K=\overline W$,
\[
 \delta=\dist(K,X\setminus O)>0,
 \qquad
 f(x)=\max\left\{0,1-\frac{d(x,K)}{\delta}\right\}.
\]
Then $f$ is nonzero, Lipschitz, and vanishes on $U$ and outside $O$.
For $x\in U$ and $n\in\operatorname{FS}(p_i)$, the IP-hole gives
$T^nx\notin B$, whence
\[
 f(T^nx)=f(x)=0.
\]
Thus $f$ is an exact Lipschitz IP-flat bump.

If $X$ is finite and nontrivial, minimality makes it a periodic orbit of
some period $q\geq2$.  Choose $x\in X$, put $U=\{x\}$, let
\[
 f=\mathbf 1_{\{Tx\}},
 \qquad
 p_i=q\quad(i\geq1).
\]
Every $p_\alpha$ is a multiple of $q$, so
\[
 f(x)=f(T^{p_\alpha}x)=0
 \qquad(\alpha\in\Fin).
\]
Every function on a finite metric space is Lipschitz.  Hence we again obtain
an exact Lipschitz IP-flat bump.  The one-point system is mildly mixing.
This proves the contrapositive and completes the equivalence.
\end{proof}

Theorem~\ref{thm:minimal-ip-functions} proves
Theorem~\ref{thmC:intro}.

\begin{corollary}%[Quantitative form]
\label{cor:quantitative-local-ip}
A minimal system is mildly mixing if and only if, for every nonconstant
$f\in C(X,[0,1])$, every nonempty open $U\subseteq X$, and every sequence
$(p_i)\subseteq\N$, there is $\varepsilon>0$ such that, for every
$\eta\in\Fin$, one can find $\alpha>\eta$ with
\[
 \|f\circ T^{p_\alpha}-f\|_U\geq\varepsilon.
\]
\end{corollary}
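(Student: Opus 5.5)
This corollary is a direct negation of Theorem~\ref{thm:minimal-ip-functions}\textup{(ii)}, so the plan is to unwind the full-tail IP-limit in \eqref{eq:local-ip-rigid} and compare quantifiers. First I record what it means for a nonconstant $f\in C(X,[0,1])$ to be locally IP-rigid on $U$ along $(p_i)$. By the definition of $\IPlim$ with limit point $0$, \eqref{eq:local-ip-rigid} holds exactly when, for every $\varepsilon>0$, there is $\eta\in\Fin$ with
\[
 \|f\circ T^{p_\alpha}-f\|_U<\varepsilon\qquad(\alpha>\eta).
\]
The neighborhoods $[0,\varepsilon)$ form a base at $0$, and the relevant quantities are nonnegative, so nothing else needs to be checked.

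Next I negate this for a fixed triple $(f,U,(p_i))$. The triple fails \eqref{eq:local-ip-rigid} if and only if there is $\varepsilon>0$ such that, for every $\eta\in\Fin$, some $\alpha>\eta$ satisfies $\|f\circ T^{p_\alpha}-f\|_U\geq\varepsilon$. This is precisely the displayed condition in the corollary. Therefore ``for every nonconstant $f$, every nonempty open $U$, and every $(p_i)$, the displayed condition holds'' is equivalent to ``$X$ carries no locally IP-rigid continuous function'' in the sense of Definition~\ref{def:local-ip-rigid}.

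Finally, I invoke the equivalence \textup{(i)}$\Leftrightarrow$\textup{(ii)} of Theorem~\ref{thm:minimal-ip-functions} for minimal $(X,T)$, which finishes the proof. The only point requiring care is that Definition~\ref{def:local-ip-rigid} already restricts to nonconstant functions with range in $[0,1]$ and places no monotonicity requirement on $(p_i)$. The corollary quantifies over exactly the same class of functions and sequences, so no affine rescaling is needed. I do not expect any real obstacle here.
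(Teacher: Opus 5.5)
Your proposal is correct and matches the paper's own proof. The paper likewise takes the negation of the full-tail IP-limit in Definition~\ref{def:local-ip-rigid} for each triple $(f,U,(p_i))$ and combines it with the equivalence \textup{(i)}$\Leftrightarrow$\textup{(ii)} of Theorem~\ref{thm:minimal-ip-functions}.
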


\begin{proof}
This is exactly the negation of the full-tail limit in
Definition~\ref{def:local-ip-rigid}, combined with
Theorem~\ref{thm:minimal-ip-functions}.
\end{proof}

\section{Factors determined by IP-rigid functions}
\label{sec:factors}

This section studies the factors determined by globally and locally IP-rigid
functions.  We construct the global and marked local factors, give a metric
condition under which local rigidity globalizes, analyze the local rigidity
algebra and its invariant core, and isolate the precise factor question that
remains open.

\subsection{Global IP-rigidity}
\label{subsec:global-ip-factor}

\begin{definition}
\label{def:global-ip-rigid}
Fix a sequence $\mathbf p=(p_i)_{i\geq1}\subseteq\N$.  A function
$f\in C(X)$ is \emph{globally IP-rigid along $\mathbf p$} if
\[
 \IPlim_{\alpha\in\Fin}
 \|f\circ T^{p_\alpha}-f\|_\infty=0.
\]
\end{definition}

\begin{proposition}%[Factor determined by a global IP-rigid function]
\label{prop:global-ip-factor}
Let $f\in C(X,[0,1])$.  Then $f$ is globally IP-rigid along
$\mathbf p$ if and only if
\[
 \IPlim_{\alpha\in\Fin}
 \sup_{y\in X_f}
 \rho_{\pm}(\sigma^{p_\alpha}y,y)=0.
\]
In this case the bilateral orbit-name factor
$(X_f,\sigma)$ is uniformly rigid.  It is the smallest factor
through which $f$ factors.
\end{proposition}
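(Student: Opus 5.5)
The plan is to mirror the proof of Proposition~\ref{prop:bilateral-rigid-factor}(ii), replacing the sequence $r_k$ by the IP-net $p_\alpha$. Put
\[
 \varepsilon_\alpha=\|f\circ T^{p_\alpha}-f\|_\infty .
\]
By \eqref{eq:translate-rigidity}, applied with $p_\alpha$ in place of $r_k$ (it holds for any positive time, since $T^j$ is surjective), we get $\|f\circ T^{j+p_\alpha}-f\circ T^j\|_\infty=\varepsilon_\alpha$ for every $j\in\Z$. Since the weights in \eqref{eq:bilateral-metric} sum to one and $\sigma^{p_\alpha}\Phi_f(x)=\Phi_f(T^{p_\alpha}x)$, it follows that
\[
 \sup_{y\in X_f}\rho_{\pm}(\sigma^{p_\alpha}y,y)\leq\varepsilon_\alpha .
\]
Conversely, the zeroth coordinate carries weight $1/3$, so for every $x$ we have $|f(T^{p_\alpha}x)-f(x)|\leq 3\rho_{\pm}(\sigma^{p_\alpha}\Phi_f(x),\Phi_f(x))$. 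Because $\Phi_f$ maps onto $X_f$, this gives $\varepsilon_\alpha\leq3\sup_{y\in X_f}\rho_{\pm}(\sigma^{p_\alpha}y,y)$. The two quantities are therefore comparable uniformly in $\alpha$, and taking full-tail IP-limits yields the equivalence.

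For uniform rigidity of $(X_f,\sigma)$ under the hypothesis, I must extract a strictly increasing sequence $r_k$ with $D(\sigma^{r_k},\id)\to0$. For each $k$, choose $\eta_k$ such that the supremum above is $<1/k$ for every $\alpha>\eta_k$. Then pick singletons $\alpha_k=\{i_k\}$ with $i_k>\max\eta_k$, or finite unions if needed, arranged so that $r_k=p_{\alpha_k}$ is strictly increasing. This step is the only real obstacle, since the sequence $(p_i)$ may be bounded or repeat values. I would handle it by noting that the disjoint blocks $\alpha$ beyond $\eta_k$ can be taken as large as desired: taking $\alpha_k$ to be a sufficiently long block of consecutive indices past $\max\eta_k$ makes $p_{\alpha_k}$ exceed $r_{k-1}$, because every $p_i\geq1$. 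The resulting $r_k$ is strictly increasing with $D(\sigma^{r_k},\id_{X_f})<1/k$, measured in $\rho_\pm$, so $(X_f,\sigma)$ is uniformly rigid.

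Finally, minimality is Proposition~\ref{prop:bilateral-rigid-factor}(iii) verbatim. If $f=h\circ q$ for a factor $q:(X,T)\to(Z,R)$, then $\Psi(z)=(h(R^jz))_{j\in\Z}$ is a factor map from $Z$ onto $X_f$ with $\Phi_f=\Psi\circ q$. Hence $X_f$ is the smallest factor through which $f$ factors.
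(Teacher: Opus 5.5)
Your proposal is correct and follows the paper's proof essentially step for step. It uses the two-sided comparison $\sup_{y\in X_f}\rho_{\pm}(\sigma^{p_\alpha}y,y)\leq\varepsilon_\alpha\leq3\sup_{y\in X_f}\rho_{\pm}(\sigma^{p_\alpha}y,y)$, obtained from the translation identity and the weight-$1/3$ zeroth coordinate; it extracts strictly increasing block sums $r_k=p_{\beta_k}$ using $p_i\geq1$, with each block pushed past a prescribed tail; and it appeals to Proposition~\ref{prop:bilateral-rigid-factor}(iii) for the minimality claim.
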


\begin{proof}
For every $j\in\Z$, surjectivity of $T^j$ gives
\[
 \|f\circ T^{j+p_\alpha}-f\circ T^j\|_\infty
 =\|f\circ T^{p_\alpha}-f\|_\infty.
\]
The metric $\rho_{\pm}$ therefore yields
\[
 \sup_{y\in X_f}
 \rho_{\pm}(\sigma^{p_\alpha}y,y)
 \leq
 \|f\circ T^{p_\alpha}-f\|_\infty.
\]
The reverse implication follows from the zeroth coordinate, whose weight is
$1/3$.

Choose successive finite blocks
$\beta_1<\beta_2<\cdots$ such that
\[
 r_k=p_{\beta_k}
\]
is strictly increasing.  This is possible because $p_i\geq1$: each new
block may be taken farther out and long enough to have a larger sum.  Since
$\beta_k$ eventually lies beyond every prescribed tail index, the
full-tail IP-convergence gives $\sigma^{r_k}\to\id$ uniformly, so the
factor is uniformly rigid.
Minimality of this factor among factors carrying $f$ was proved in
Proposition~\ref{prop:bilateral-rigid-factor}(iii).
\end{proof}

\subsection{The canonical factor with marked local data}
\label{subsec:marked-local-factor}

Local IP-rigidity does not initially give uniform convergence in a factor
metric.  The factor must retain the observable and its local data.

\begin{definition}%[Marked locally IP-rigid factor]
\label{def:marked-local-factor}
A \emph{marked locally IP-rigid factor} of $(X,T)$ consists of a nontrivial
factor map
\[
 q:(X,T)\longrightarrow(Y,S),
\]
a nonconstant function $g\in C(Y,[0,1])$, a nonempty open set
$W\subseteq Y$, and a sequence $(p_i)\subseteq\N$ such that
\[
 \IPlim_{\alpha\in\Fin}
 \|g\circ S^{p_\alpha}-g\|_W=0.
\]
The observable, open set, and sequence are part of the marked data.
\end{definition}

\begin{theorem}%[Canonical factor with marked local data]
\label{thm:canonical-marked-factor}
Let $(X,T)$ be minimal, and let the nonconstant function
$f\in C(X,[0,1])$ be locally IP-rigid on a nonempty open set $U$ along
$\mathbf p=(p_i)$.  Then the canonical bilateral orbit-name factor has the
following properties:
\begin{enumerate}[label=\textup{(\roman*)}]
\item $(X_f,\sigma)$ is a nontrivial minimal factor of $(X,T)$;
\item for the nonconstant zeroth-coordinate function $\widehat f(y)=y_0$,
      there is a nonempty open $W\subseteq X_f$ such that
      \[
       \IPlim_{\alpha\in\Fin}
       \|\widehat f\circ\sigma^{p_\alpha}-\widehat f\|_W=0;
      \]
\item $X_f$ is the smallest factor through which $f$ factors.
\end{enumerate}
Consequently, a minimal system is mildly mixing if and only if it has no
nontrivial marked locally IP-rigid factor.
\end{theorem}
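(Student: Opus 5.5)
The plan is to work entirely with the bilateral orbit-name map $\Phi_f:(X,T)\to(X_f,\sigma)$ from Proposition~\ref{prop:bilateral-rigid-factor}, transporting the local data from $X$ to $X_f$.

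For \textup{(i)}, Proposition~\ref{prop:bilateral-rigid-factor}(i) already shows that $\Phi_f$ is a factor map. A factor of a minimal system is minimal, because the image of a dense orbit is dense. The factor is nontrivial because $\widehat f\circ\Phi_f=f$ is nonconstant, so $X_f$ contains two points with different zeroth coordinates.

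Item \textup{(iii)} is exactly Proposition~\ref{prop:bilateral-rigid-factor}(iii).

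For \textup{(ii)}, the difficulty is that $\Phi_f(U)$ need not be open in $X_f$. I would resolve this with the standard minimality trick: a factor map between minimal systems is semi-open.
\begin{enumerate}[label=\textup{(\alph*)}]
\item Choose a nonempty open $U'$ with $\overline{U'}\subseteq U$.
\item By minimality, finitely many translates $T^{-j}U'$, $0\le j\le N$, cover $X$.
\item The compact images $\Phi_f(T^{-j}\overline{U'})=\sigma^{-j}\Phi_f(\overline{U'})$ therefore cover $X_f$.
\item By the Baire category theorem, one of them has nonempty interior.
\item Applying $\sigma^{j}$, the set $\Phi_f(\overline{U'})$ itself has nonempty interior $W$.
\end{enumerate}
Now take $y\in W$ and write $y=\Phi_f(x)$ with $x\in\overline{U'}\subseteq U$. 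Then
\[
 |\widehat f(\sigma^{p_\alpha}y)-\widehat f(y)|=|f(T^{p_\alpha}x)-f(x)|\le\|f\circ T^{p_\alpha}-f\|_U,
\]
so the full-tail IP-limit \eqref{eq:local-ip-rigid} transfers to $W$. This gives \textup{(ii)}. I expect this semi-openness step to be the main obstacle. It is the only place where minimality is used for more than nontriviality, so I would be careful that the Baire argument is applied to the compact sets $\Phi_f(T^{-j}\overline{U'})$ and not to the images of open sets.

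For the final equivalence, suppose first that $(X,T)$ is not mildly mixing. Theorem~\ref{thm:minimal-ip-functions} then gives a nonconstant locally IP-rigid $f$, which can be normalized to $[0,1]$. Items \textup{(i)}–\textup{(ii)} then produce the marked factor $(X_f,\widehat f,W,\mathbf p)$.

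Conversely, suppose $q:(X,T)\to(Y,S)$ is a marked locally IP-rigid factor with data $(g,W,\mathbf p)$. Put $f=g\circ q$ and $U=q^{-1}(W)$. Then $U$ is nonempty and open, and $f$ is nonconstant because $q$ is surjective. For $x\in U$ we have
\[
 |f(T^{p_\alpha}x)-f(x)|=|g(S^{p_\alpha}qx)-g(qx)|\le\|g\circ S^{p_\alpha}-g\|_W,
\]
so $f$ is locally IP-rigid. By Theorem~\ref{thm:minimal-ip-functions}, $(X,T)$ is therefore not mildly mixing. This direction needs no minimality of $Y$ beyond what the hypotheses give.
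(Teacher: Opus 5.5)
Your proposal is correct and follows the paper's proof essentially step for step. It uses the same Baire-category argument on the finite closed cover $\{\sigma^{-j}\Phi_f(\overline{U'})\}$ to give $\Phi_f(\overline{U'})$ nonempty interior $W$, the same transfer inequality on $W$, and the same pullback $f=g\circ q$, $U=q^{-1}(W)$ for the converse via Theorem~\ref{thm:minimal-ip-functions}.
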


\begin{proof}
Proposition~\ref{prop:bilateral-rigid-factor}(i) shows that
$\Phi_f:X\to X_f$ is a factor map.  Its image is nontrivial
because $f$ is nonconstant, and a factor of a minimal system is minimal.

Choose a nonempty open set $V$ with $\overline V\subseteq U$.
Minimality and compactness give $a_1,\ldots,a_s\in\Nzero$ such that
\[
 X=\bigcup_{\ell=1}^{s}T^{-a_\ell}V.
\]
Therefore
\[
 X_f
 =\bigcup_{\ell=1}^{s}
 \sigma^{-a_\ell}\Phi_f(\overline V).
\]
This is a finite cover by closed sets.  By the Baire category theorem, one
member has nonempty interior.  Since $\sigma$ is a homeomorphism,
\[
 W=\Int_{X_f}\Phi_f(\overline V)
\]
is nonempty.  If $y\in W$, choose $x\in\overline V$ with
$\Phi_f(x)=y$.  Then
\[
\begin{split}
 |\widehat f(\sigma^{p_\alpha}y)-\widehat f(y)|
 &=|f(T^{p_\alpha}x)-f(x)|\\
 &\leq\|f\circ T^{p_\alpha}-f\|_U.
\end{split}
\]
This proves \textup{(ii)}.  Assertion \textup{(iii)} is
Proposition~\ref{prop:bilateral-rigid-factor}(iii).

Finally, suppose that $(q,g,W,\mathbf p)$ is marked local data, and put
$f=g\circ q$ and $U=q^{-1}(W)$.  Since $q(U)=W$ and $qT^n=S^nq$, one has
\[
 \|f\circ T^{p_\alpha}-f\|_U
 =\|g\circ S^{p_\alpha}-g\|_W.
\]
Thus a marked observable pulls back to a nonconstant locally IP-rigid
function on $X$, while the preceding construction produces a marked factor
from every locally IP-rigid function.  The last assertion now follows from
Theorem~\ref{thm:minimal-ip-functions}.
\end{proof}

\begin{remark}
\label{rem:marked-not-uniform}
Theorem~\ref{thm:canonical-marked-factor} does not say that
$(X_f,\sigma)$ is uniformly rigid.  It controls one coordinate on
one open set.  Uniform rigidity would require simultaneous control of a
point-separating family, or equivalently control of the factor metric.
\end{remark}

\subsection{When local metric rigidity globalizes}
\label{subsec:metric-globalization}

\begin{proposition}%[Metric local rigidity globalizes]
\label{prop:metric-globalization}
Let $(Y,S)$ be a minimal topological dynamical system with compatible
metric $d_Y$.  Suppose that, for a nonempty open $W\subseteq Y$ and a
sequence $(p_i)\subseteq\N$,
\[
 \IPlim_{\alpha\in\Fin}
 \sup_{y\in W}d_Y(S^{p_\alpha}y,y)=0.
\]
Then
\[
 \IPlim_{\alpha\in\Fin}
 \sup_{y\in Y}d_Y(S^{p_\alpha}y,y)=0.
\]
In particular, $(Y,S)$ is uniformly rigid.
\end{proposition}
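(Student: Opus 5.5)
The plan is to transport the control on $W$ to all of $Y$ using finitely many iterates of $S$. Minimality and compactness give such a finite family, and uniform continuity of finitely many inverse iterates lets the control pass through it. This is the same covering device used in the proof of Theorem~\ref{thm:canonical-marked-factor}, now applied to the metric itself rather than to one coordinate.

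First, since $(Y,S)$ is minimal, every forward orbit $\{S^n y:n\in\Nzero\}$ meets the nonempty open set $W$. Hence the open sets $S^{-n}W$, $n\in\Nzero$, cover $Y$. By compactness there are $a_1,\ldots,a_s\in\Nzero$ with
\[
 Y=\bigcup_{\ell=1}^{s}S^{-a_\ell}W .
\]
Second, fix $\varepsilon>0$. Each $S^{-a_\ell}$ is uniformly continuous on the compact space $Y$, so there is one $\delta>0$ such that
\[
 d_Y(u,v)<\delta \ \Longrightarrow\ d_Y(S^{-a_\ell}u,S^{-a_\ell}v)<\varepsilon
 \qquad(\ell=1,\ldots,s).
\]
By the hypothesis there is $\eta\in\Fin$ with $\sup_{w\in W}d_Y(S^{p_\alpha}w,w)<\delta$ for all $\alpha>\eta$.

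Third, given $y\in Y$, choose $\ell$ with $w=S^{a_\ell}y\in W$. Because powers of $S$ commute, $S^{p_\alpha}y=S^{-a_\ell}S^{p_\alpha}w$ and $y=S^{-a_\ell}w$. For $\alpha>\eta$ this gives $d_Y(S^{p_\alpha}y,y)<\varepsilon$, uniformly in $y$. Thus the full-tail IP-limit of $\sup_{y\in Y}d_Y(S^{p_\alpha}y,y)$ is $0$.

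For the final assertion, I would extract a rigidity sequence exactly as in the proof of Proposition~\ref{prop:global-ip-factor}. Choose blocks $\beta_1<\beta_2<\cdots$ in $\Fin$, each lying beyond the previous ones and long enough that $r_k=p_{\beta_k}$ is strictly increasing; this uses $p_i\ge1$. Since $\beta_k$ eventually lies beyond every tail index, $D(S^{r_k},\id_Y)\to0$, so $(Y,S)$ is uniformly rigid.

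I expect no genuine obstacle. The only point needing care is to use a uniform modulus for the \emph{finitely many} inverse maps $S^{-a_\ell}$, chosen before the tail $\eta$. It is exactly this finiteness, supplied by minimality, that makes the local-to-global passage work.
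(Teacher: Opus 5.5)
Your proposal is correct and matches the paper's proof step for step. Both use the finite cover $Y=\bigcup_{\ell}S^{-a_\ell}W$ from minimality and compactness, a single modulus $\delta$ for the finitely many maps $S^{-a_\ell}$, transport of the local estimate via commuting powers of $S$, and then extraction of strictly increasing block sums $r_k=p_{\beta_k}$ to get uniform rigidity.
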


\begin{proof}
Choose $a_1,\ldots,a_s\in\Nzero$ such that
\[
 Y=\bigcup_{\ell=1}^{s}S^{-a_\ell}W.
\]
Given $\varepsilon>0$, uniform continuity of the finitely many maps
$S^{-a_\ell}$ gives $\delta>0$ such that
\[
 d_Y(v,w)<\delta
 \quad\Longrightarrow\quad
 d_Y(S^{-a_\ell}v,S^{-a_\ell}w)<\varepsilon
\]
for every $\ell$.  For every sufficiently late $\alpha$, the local
supremum is less than $\delta$.  Given $y\in Y$, choose $\ell$ with
$u=S^{a_\ell}y\in W$.  Commutativity of the powers of $S$ gives
\[
\begin{split}
 d_Y(S^{p_\alpha}y,y)
 &=d_Y(S^{-a_\ell}S^{p_\alpha}u,S^{-a_\ell}u)
 <\varepsilon.
\end{split}
\]
This proves global IP-convergence.  Extracting strictly increasing block
sums $r_k=p_{\beta_k}$ gives
$S^{r_k}\to\id_Y$ uniformly.
\end{proof}

\subsection{The local rigidity algebra and its invariant core}
\label{subsec:local-core}

\begin{definition}
\label{def:local-core}
For a nonempty open $U\subseteq X$ and a sequence
$\mathbf p=(p_i)\subseteq\N$, define
\begin{align}
 \mathcal L(U,\mathbf p)
 &=\left\{f\in C(X):
   \IPlim_{\alpha\in\Fin}
   \|f\circ T^{p_\alpha}-f\|_U=0\right\},
 \label{eq:local-algebra}\\
 \mathcal C(U,\mathbf p)
 &=\left\{f\in C(X):
   f\circ T^j\in\mathcal L(U,\mathbf p)
   \text{ for every }j\in\Z\right\}.
 \label{eq:invariant-core}
\end{align}
We call $\mathcal C(U,\mathbf p)$ the
\emph{$T^{\pm1}$-invariant core} of the local rigidity algebra.
Here and below, $\mathbb R\mathbf1$ denotes the algebra of constant
real-valued functions.
\end{definition}

\begin{theorem}%[Factor determined by the invariant core]
\label{thm:core-factor}
Let $(X,T)$ be minimal.  For every pair $(U,\mathbf p)$:
\begin{enumerate}[label=\textup{(\roman*)}]
\item $\mathcal L(U,\mathbf p)$ is a closed unital subalgebra of $C(X)$,
      and $\mathcal C(U,\mathbf p)$ is its largest closed unital
      $T^{\pm1}$-invariant subalgebra;
\item $\mathcal C(U,\mathbf p)$ determines a canonical factor
      \[
       q^{\mathrm{core}}_{U,\mathbf p}:(X,T)\longrightarrow
       (Y^{\mathrm{core}}_{U,\mathbf p},
        S^{\mathrm{core}}_{U,\mathbf p})
      \]
      which is maximal among factors $q:(X,T)\to(Y,S)$ satisfying
      $q^{*}C(Y)\subseteq\mathcal L(U,\mathbf p)$;
\item if $\mathcal C(U,\mathbf p)$ contains a nonconstant function, then
      its factor is nontrivial and uniformly rigid.
\end{enumerate}
\end{theorem}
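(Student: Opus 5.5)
For (i), I will follow the argument of Theorem~\ref{thm:maximal-rigid-factor} with $\|\cdot\|_\infty$ replaced by $\|\cdot\|_U$. Constants lie in $\mathcal L(U,\mathbf p)$, and linear combinations are immediate. For products, the estimate
\[
 \|(fg)\circ T^{p_\alpha}-fg\|_U\leq\|f\|_\infty\|g\circ T^{p_\alpha}-g\|_U+\|g\|_\infty\|f\circ T^{p_\alpha}-f\|_U
\]
shows that the two full tails can be intersected: take $\eta$ as the larger of the two thresholds. Closedness follows from $\|f\circ T^{p_\alpha}-f\|_U\leq2\|f-f_n\|_\infty+\|f_n\circ T^{p_\alpha}-f_n\|_U$.

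Since each map $f\mapsto f\circ T^j$ is an algebra automorphism of $C(X)$ and an isometry, $\mathcal C(U,\mathbf p)=\bigcap_{j\in\Z}U_T^{-j}\mathcal L(U,\mathbf p)$ is a closed unital subalgebra. It is $T^{\pm1}$-invariant by reindexing $j$, and it is contained in $\mathcal L$ by taking $j=0$. Any $T^{\pm1}$-invariant subalgebra $\mathcal B\subseteq\mathcal L$ satisfies $f\circ T^j\in\mathcal B\subseteq\mathcal L$ for all $f\in\mathcal B$ and $j\in\Z$, so $\mathcal B\subseteq\mathcal C$. This gives maximality.

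For (ii), I apply Lemma~\ref{lem:factor-algebra} to $\mathcal C$, which satisfies $U_T\mathcal C=\mathcal C$. If $q$ is a factor with $q^{*}C(Y)\subseteq\mathcal L$, then $q^{*}C(Y)$ is a $T^{\pm1}$-invariant algebra, because $(h\circ q)\circ T^j=(h\circ S^j)\circ q$. By (i) it is therefore contained in $\mathcal C$, and the universal property in Lemma~\ref{lem:factor-algebra} gives the factorization. The core factor itself qualifies, since $q^{*}C(Y^{\mathrm{core}})=\mathcal C\subseteq\mathcal L$. This is the maximal member of the class, and it is canonical.

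For (iii), the factor is nontrivial because a nonconstant member of $\mathcal C=q^{*}C(Y^{\mathrm{core}})$ separates two fibers. For uniform rigidity, I realize $Y=Y^{\mathrm{core}}$ in coordinates via a sequence $(f_m)$ dense in the unit ball of $\mathcal C$, with metric $\rho(y,z)=\sum_m2^{-m}|y_m-z_m|$. Put $W=q(U')$, where $U'\subseteq U$ is open. The main obstacle is that $q(U)$ need not be open, and local control must be converted into control of the factor metric. I would handle this exactly as in the proof of Theorem~\ref{thm:canonical-marked-factor}. Pick $V$ open with $\overline V\subseteq U$. Minimality gives a finite cover $X=\bigcup_\ell T^{-a_\ell}V$, hence $Y=\bigcup_\ell S^{-a_\ell}q(\overline V)$, and Baire's theorem shows that $W=\Int_Y q(\overline V)$ is nonempty.

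For $y\in W$, choose $x\in\overline V\subseteq U$ with $q(x)=y$. Then $\rho(S^{p_\alpha}y,y)\leq\sum_m2^{-m}\|f_m\circ T^{p_\alpha}-f_m\|_U$. Each term tends to $0$ along full tails, and the tail of the series is summable, so the whole sum tends to $0$: control finitely many coordinates by a common $\eta$ and bound the rest by $2^{-N}\cdot 2$. This gives local metric IP-rigidity on $W$. Since $Y$ is minimal as a factor of a minimal system, Proposition~\ref{prop:metric-globalization} globalizes it and yields uniform rigidity of $(Y,S)$ along block sums $r_k=p_{\beta_k}$. The one point to check carefully is that no bilateral invariance of $\mathcal L$ itself is needed: only membership of each $f_m$ in $\mathcal L$ is used, while invariance of $\mathcal C$ is what makes the factor exist.
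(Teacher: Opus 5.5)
Your proposal is correct and follows the paper's proof essentially step for step. For (i) you use the same product and $2\|f-f_n\|_\infty$ estimates, for (ii) the universal property of Lemma~\ref{lem:factor-algebra} applied to the core, and for (iii) the Baire argument giving $W=\Int_Y q(\overline V)\neq\emptyset$, the coordinate-metric estimate over $U$, and globalization via Proposition~\ref{prop:metric-globalization}. Your preliminary ``$W=q(U')$'' is superseded by the Baire choice of $W$ and should simply be deleted; your explicit check that the core factor itself belongs to the class in (ii) is a small improvement on the paper's wording.
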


\begin{proof}
The triangle inequality shows that $\mathcal L(U,\mathbf p)$ is a vector
space containing the constants.  If $f,g\in\mathcal L(U,\mathbf p)$, then
\[
\begin{split}
 \|(fg)\circ T^{p_\alpha}-fg\|_U
 &\leq
 \|f\|_\infty\|g\circ T^{p_\alpha}-g\|_U\\
 &\quad+
 \|g\|_\infty\|f\circ T^{p_\alpha}-f\|_U,
\end{split}
\]
so it is an algebra.  If $f_n\in\mathcal L(U,\mathbf p)$ and $f_n\to f$
uniformly, then the estimate used in the proof of
Theorem~\ref{thm:maximal-rigid-factor}, namely
\[
 \|f\circ T^{p_\alpha}-f\|_U
 \leq2\|f-f_n\|_\infty
 +\|f_n\circ T^{p_\alpha}-f_n\|_U,
\]
shows that it is closed under uniform limits.
Definition~\eqref{eq:invariant-core} shows that
$\mathcal C(U,\mathbf p)$ is closed, unital, and invariant under both
$U_T$ and $U_T^{-1}$.  Every $T^{\pm1}$-invariant subalgebra contained
in $\mathcal L(U,\mathbf p)$ is contained in the core.  This proves
\textup{(i)}.

Apply Lemma~\ref{lem:factor-algebra} to
$\mathcal C(U,\mathbf p)$.  If a factor $q:X\to Y$ satisfies
\[
 q^{*}C(Y)\subseteq\mathcal L(U,\mathbf p),
\]
then its pullback algebra is $T^{\pm1}$-invariant and hence is contained
in $\mathcal C(U,\mathbf p)$.  The universal part of
Lemma~\ref{lem:factor-algebra} makes $q$ factor through the core factor.
This proves \textup{(ii)}.

Assume now that the core is nonconstant.  Choose a nonempty open set $V$
with $\overline V\subseteq U$.  By minimality, there are
$a_1,\ldots,a_s\in\Nzero$ such that
\[
 X=\bigcup_{\ell=1}^{s}T^{-a_\ell}V.
\]
Writing $q=q^{\mathrm{core}}_{U,\mathbf p}$ and
$S=S^{\mathrm{core}}_{U,\mathbf p}$, we obtain the finite closed cover
\[
 Y^{\mathrm{core}}_{U,\mathbf p}
 =\bigcup_{\ell=1}^{s}
 S^{-a_\ell}q(\overline V).
\]
Baire category gives nonempty interior to one member of this cover.  Since
each factor iterate is a homeomorphism, it follows that
\[
 W=\Int q(\overline V)\neq\emptyset.
\]
Choose a sequence $(f_m)$ dense in the closed unit ball of the core,
identify the core factor with the coordinate realization furnished by
Lemma~\ref{lem:factor-algebra}, and equip it with
\[
 d_Y(y,z)=\sum_{m=1}^{\infty}2^{-m}|y_m-z_m|.
\]
For $M\geq1$ and $\alpha\in\Fin$,
\[
 \sup_{y\in W}d_Y(S^{p_\alpha}y,y)
 \leq
 \sum_{m=1}^{M}2^{-m}
 \|f_m\circ T^{p_\alpha}-f_m\|_U+2\sum_{m>M}2^{-m}.
\]
First choose $M$ so that the second term is small, and then choose one
common IP-tail on which the finitely many local norms in the first term are
small.  Hence
\[
 \IPlim_{\alpha\in\Fin}
 \sup_{y\in W}d_Y(S^{p_\alpha}y,y)=0.
\]
The core factor is minimal because it is a factor of $(X,T)$.
Proposition~\ref{prop:metric-globalization} now gives uniform rigidity of the
core factor.
\end{proof}

\begin{remark}[Two extremal cases]
\label{rem:extremal-cores}
The invariant-core picture is transparent at two extremes.  If $(X,T)$
is minimal and mildly mixing, Theorem~\ref{thm:minimal-ip-functions} gives
\[
 \mathcal L(U,\mathbf p)=\mathcal C(U,\mathbf p)=\mathbb R\mathbf1
\]
for every nonempty open $U$ and every sequence
$\mathbf p=(p_i)\subseteq\N$.
At the opposite extreme, let $T=R_\alpha$ be an irrational rotation of
$\Torus$.  Choose a strictly increasing sequence $(p_i)\subseteq\N$ such that
$\|p_i\alpha\|_{\Torus}<2^{-i}$.  For every finite $\beta$ with
$\min\beta>N$,
\[
 \|p_\beta\alpha\|_{\Torus}
 \leq\sum_{i\in\beta}2^{-i}
 \leq\sum_{i>N}2^{-i}.
\]
Uniform continuity therefore makes every member of $C(\Torus)$ globally
IP-rigid along $\mathbf p$, and hence
\[
 \mathcal L(U,\mathbf p)=\mathcal C(U,\mathbf p)=C(\Torus)
\]
for every nonempty open $U$.  The unresolved local-to-global issue lies
between these two cases: a local algebra may be nontrivial while it is not
known, in general, whether some invariant core must be nontrivial.
\end{remark}

\begin{remark}[Why minimality is necessary]
\label{rem:minimality-core}
The globalization statement fails without minimality.  Let
\[
 Y=\Torus\sqcup\{0,1\}^{\Z},
 \qquad
 S=R_\alpha\sqcup\sigma,
\]
where $R_\alpha$ is an irrational rotation and $\sigma$ is the two-sided
full shift.  Choose a strictly increasing sequence $(p_i)\subseteq\N$ so that
\[
 \|p_i\alpha\|_{\Torus}<2^{-i},
\]
and let $U=\Torus$, which is clopen in $Y$.  Every continuous function
and every one of its $S$-translates is locally IP-rigid on $U$.
Indeed, if $\min\beta>N$, then
\[
 \|p_\beta\alpha\|_{\Torus}
 \leq\sum_{i\in\beta}2^{-i}
 \leq\sum_{i>N}2^{-i},
\]
and uniform continuity of the restriction of the function (or any fixed
translate) to the circle gives the required local IP-limit.  Therefore
\[
 \mathcal C(U,\mathbf p)=C(Y).
\]
Nevertheless $Y$ is not uniformly rigid because its full-shift component
is not uniformly rigid.  Thus minimality is essential in
Proposition~\ref{prop:metric-globalization} and
Theorem~\ref{thm:core-factor}.
\end{remark}

\subsection{The exact form of the uniformly rigid factor question}
\label{subsec:exact-open-question}

\begin{corollary}
\label{cor:exact-core-form}
For minimal topological dynamical systems,
\begin{align*}
 (X,T)\text{ is not mildly mixing}
 &\Longleftrightarrow
 \mathcal L(U,\mathbf p)\neq\mathbb R\mathbf1
 \text{ for some }(U,\mathbf p),\\
 (X,T)\text{ has a nontrivial uniformly rigid factor}
 &\Longleftrightarrow
 \mathcal C(U,\mathbf p)\neq\mathbb R\mathbf1
 \text{ for some }(U,\mathbf p).
\end{align*}
\end{corollary}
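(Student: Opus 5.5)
The plan is to prove the two equivalences separately. Each one reduces to a result already established, with only a little bookkeeping about what ``$\neq\mathbb R\mathbf1$'' means. Note that $\mathcal L(U,\mathbf p)\neq\mathbb R\mathbf1$ is the same as $\mathcal L(U,\mathbf p)$ containing a nonconstant function. The same holds for the core, since both contain the constants by Theorem~\ref{thm:core-factor}(i).

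For the first equivalence, suppose first that $(X,T)$ is not mildly mixing. Theorem~\ref{thm:minimal-ip-functions} gives a locally IP-rigid $f\in C(X,[0,1])$, nonconstant by definition, on some nonempty open $U$ along some $\mathbf p$. By \eqref{eq:local-ip-rigid} this is exactly the statement $f\in\mathcal L(U,\mathbf p)$, so the local algebra is not $\mathbb R\mathbf1$. Conversely, let $g\in\mathcal L(U,\mathbf p)$ be nonconstant. Affinely normalize it to $f=(g-\min g)/(\max g-\min g)\in C(X,[0,1])$. The local norms are only multiplied by a positive constant, so $f$ is a nonconstant locally IP-rigid function. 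Theorem~\ref{thm:minimal-ip-functions} then shows that $(X,T)$ is not mildly mixing.

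For the second equivalence, the direction ``$\Leftarrow$'' is Theorem~\ref{thm:core-factor}(iii): a nonconstant member of the core yields a nontrivial uniformly rigid core factor. For ``$\Rightarrow$'', let $q:(X,T)\to(Z,R)$ be a nontrivial factor with uniform rigidity sequence $(r_k)$. From it I would build an IP-sequence along which $q^{*}C(Z)$ is globally IP-rigid:
\begin{enumerate}[label=\textup{(\arabic*)}]
\item Pass to a subsequence so that $D(R^{r_k},\id_Z)<2^{-k}$, and put $p_i=r_i$.
\item Iterate \eqref{eq:uniform-subadditivity} to get
\[
 D(R^{p_\alpha},\id_Z)\leq\sum_{i\in\alpha}2^{-i}\leq 2^{-\min\alpha+1},
\]
so $R^{p_\alpha}\to\id_Z$ uniformly along full tails.
\item For $h\in C(Z)$, uniform continuity of $h$ gives
\[
 \IPlim_{\alpha\in\Fin}\|h\circ q\circ T^{p_\alpha}-h\circ q\|_\infty=0.
\]
Hence $q^{*}C(Z)\subseteq\mathcal L(U,\mathbf p)$ for every nonempty open $U$, in particular for $U=X$.
\end{enumerate}
The pullback algebra $q^{*}C(Z)$ is $T^{\pm1}$-invariant, since $h\circ q\circ T^{j}=(h\circ R^{j})\circ q$. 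It is therefore contained in $\mathcal C(U,\mathbf p)$, by Theorem~\ref{thm:core-factor}(i) and the definition \eqref{eq:invariant-core}. Because $Z$ is nontrivial, some $h\in C(Z)$ is nonconstant, and $h\circ q$ is then nonconstant since $q$ is surjective. So the core is not $\mathbb R\mathbf1$.

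There is no real obstacle. The only point needing care is the passage from a strictly increasing uniform rigidity sequence to full-tail IP-convergence. A bare sequence controls only single terms, not finite sums, so the summable subsequence together with the subadditivity estimate \eqref{eq:uniform-subadditivity} is exactly what supplies the IP control. This is the same mechanism as in the rotation example of Remark~\ref{rem:extremal-cores}.
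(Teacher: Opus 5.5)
Your proposal is correct and follows essentially the same route as the paper. The first equivalence comes from Theorem~\ref{thm:minimal-ip-functions} plus affine normalization. The second uses Theorem~\ref{thm:core-factor}(iii) in one direction; in the other, a summable subsequence of a uniform rigidity sequence together with \eqref{eq:uniform-subadditivity} makes $q^{*}C(Z)$ globally IP-rigid. The only cosmetic difference is how membership in the core is obtained: you use $T^{\pm1}$-invariance of the pullback algebra $q^{*}C(Z)$, which works for every $U$, whereas the paper takes $U=X$ and observes that $\mathcal L(X,\mathbf p)$ is itself $T^{\pm1}$-invariant, so $\mathcal L(X,\mathbf p)=\mathcal C(X,\mathbf p)$.
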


\begin{proof}
The first equivalence is Theorem~\ref{thm:minimal-ip-functions}, together
with affine normalization of a nonconstant real-valued function.  One
direction of the second equivalence is
Theorem~\ref{thm:core-factor}(iii).

Conversely, suppose
$q:(X,T)\to(Y,S)$ is a nontrivial uniformly rigid factor.  Pass to a
subsequence $(p_i)$ of a uniform rigidity sequence so that
\[
 \sup_{y\in Y}d_Y(S^{p_i}y,y)<2^{-i}.
\]
By \eqref{eq:uniform-subadditivity}, applied to $(Y,S)$, for every finite
$\alpha$,
\[
 \sup_{y\in Y}d_Y(S^{p_\alpha}y,y)
 \leq\sum_{i\in\alpha}2^{-i}.
\]
Uniform continuity therefore shows that every member of $q^{*}C(Y)$ is
globally IP-rigid along $\mathbf p$.  Hence
\[
 q^{*}C(Y)
 \subseteq\mathcal L(X,\mathbf p)
 =\mathcal C(X,\mathbf p).
\]
Here the equality follows because surjectivity of every $T^j$, $j\in\Z$,
makes the global norm invariant under composition with $T^j$; hence
$\mathcal L(X,\mathbf p)$ is already $T^{\pm1}$-invariant.
The right-hand core is nonconstant because the factor is nontrivial.
\end{proof}

Corollary~\ref{cor:exact-core-form} isolates the unresolved implication in
Question~\ref{ques:huang-ye}.  It asks whether nontriviality of some local
rigidity algebra must force nontriviality of an invariant core, possibly for
different local data.  The hierarchy proved in this section is:
\[
\begin{array}{c}
 \text{global IP-rigid observable}\\
 \Downarrow\\
 \text{uniformly rigid bilateral orbit-name factor},
\end{array}
\qquad
\begin{array}{c}
 \text{locally IP-rigid observable}\\
 \Downarrow\\
 \text{marked local factor},
\end{array}
\]
while a uniformly rigid ordinary factor follows from the local data precisely
when enough orbit translates survive in the invariant core.  No resolution of
Question~\ref{ques:huang-ye} is asserted here.

% -------------------------------------------------------------------------
% AUTHOR ACTION REQUIRED BEFORE SUBMISSION/PROOF APPROVAL:
% Insert the complete Funding statement here.  The source supplied for this
% revision contains no funding information, so none has been invented.
% Example syntax:
% \subsection*{Funding}
% This work was supported by ... [grant number ...].
% -------------------------------------------------------------------------

\end{document}